\newcommand{\reto}{r_\star}
\newcommand{\beto}{b_\star}
\newcommand{\deto}{d_\star}
\newcommand{\Weto}{W_\star}
\newcommand{\Zeto}{Z_\star}
\newcommand{\R}{\mathbb R}
\newcommand{\pse}{\psi'({r})}
\newcommand{\Z}{\mathbb Z}
\newcommand{\Esp}{\mathbb E}
\newcommand{\p}{\mathbb P}
\newcommand{\1}[1]{\mathbf{1}\!_{\{#1\}}}
\newcommand{\EE}[1]{\Esp\left[#1\right]}
\newcommand{\ext}{\mathrm{Ext}}
\newcommand{\as}{\quad\mathrm{ a.s.}}
\newcommand{\eps}{\varepsilon}
\newcommand{\intpos}{\int_0^\infty}
\newcommand{\dif}{\mathrm{d}}
\newtheorem{prop}{Proposition}[section]
\newtheorem{thm}[prop]{Theorem}
\newtheorem{rem}[prop]{Remark}
\newtheorem{cor}[prop]{Corollary}
\newtheorem{conj}[prop]{Conjecture}
\begin{document}
\title{Birth and death processes with neutral mutations}
 \author{\textsc{Nicolas Champagnat$^{1}$, Amaury Lambert$^{2}$ and Mathieu Richard$^{2}$}}

\footnotetext[1]{Universit\'e de Lorraine, IECN, Campus Scientifique, B.P. 70239, Vand{\oe}uvre-l\`es-Nancy Cedex, F-54506, France
and Inria, Villers-l\`es-Nancy, F-54600, France E-mail: \href{mailto:Nicolas.Champagnat@inria.fr}{\nolinkurl{Nicolas.Champagnat@inria.fr}}
}

\footnotetext[2]{Laboratoire de Probabilit\'es et Mod\`eles Al\'eatoires,
UMR 7599 CNRS and UPMC Univ Paris 06,
Case courrier 188,
4 Place Jussieu,
F-75252 Paris Cedex 05, France, Emails:
\href{mailto:amaury.lambert@upmc.fr}{\nolinkurl{amaury.lambert@upmc.fr}}, \href{mailto:mathieu.richard@upmc.fr}{\nolinkurl{mathieu.richard@upmc.fr}}
}

\maketitle
\begin{abstract}In this paper, we review recent results of ours concerning branching processes with general 
lifetimes and neutral mutations, under the infinitely many alleles model, where mutations can occur either  at 
birth of particles or at a constant rate during their lives.

In both models, we study the allelic partition of the population at time $t$. We give closed-form formulae for 
the expected frequency spectrum at $t$ and prove pathwise convergence to an explicit limit, as $t\to+\infty$, 
of the relative numbers of types younger than some given age and carried by a given number of particles 
(small families). We also provide convergences in distribution of the sizes or ages of the largest families and of the oldest families.

In the case of exponential lifetimes, population dynamics are given by linear birth and death processes, and we can most of the time provide general formulations of our results unifying both models.
\end{abstract}
 \emph{Key words:} branching processes, birth and death processes, neutral mutation, infinitely many allele model, frequency spectrum.\\
\emph{MSC 2010:} 60J80, 60J28, 92D25, 60J85, 60J27, 92D15.

\tableofcontents

 \section{Introduction}
We consider a general branching model, where particles have i.i.d. (not necessarily exponential) life lengths and give birth at constant rate $b$ during their lives to independent copies of themselves. The genealogical tree thus produced  is called \emph{splitting tree} \cite{Geiger1996,Geiger1997,Amaury_contour_splitting_trees}. The process that counts the number of alive particles through time is a \emph{Crump-Mode-Jagers process} (or general branching process) \cite{Jagers_BP_with_bio} which is binary (births occur singly) and homogeneous (constant birth rate).

We enrich this genealogical model with mutations. In Model I, each child is a clone of her mother with probability $1-p$ and a mutant with probability $p$. In Model II, independently of other particles, each particle undergoes mutations during her life at  constant rate $\theta$ (and births are always clonal).
For both models, we are working under the infinitely many alleles model, that is, a mutation yields a type, also called \emph{allele}, which was never encountered before. Moreover, mutations are supposed to be neutral, that is, they do not modify the way particles die and reproduce. For any type and any time $t$, we call \emph{family} the set of all particles that share this type at time $t$.\\

Branching processes (and especially birth and death processes) with mutations have many applications in biology. In carcinogenesis \cite{Nowak09122003,IwasaApril2006,Sagitov2009,Durrett201042,Durrett_Mayberry,Durrett_Ovarian}, they can model the evolution of cancerous cells.
In \cite{Kendall1960}, Kendall  modeled carcinogenesis by a birth and death process where mutations occur during life according to an inhomogeneous Poisson process.
In \cite{Durrett_Ovarian,Durrett201042}, cancerous cells are modeled by a multitype branching process where a cell is of type $k$ if it has undergone $k$ mutations and where the more a cell has undergone mutations, the faster it grows. The object of study is the time $\tau_k$ of appearance of the first cell of type $k$.
In \cite{Sagitov2009}, the authors study the arrival time of the first resistant cell and the number of resistant cells, in a model of cancerous cells undergoing a medical treatment and becoming resistant after having experienced a certain number of mutations.

Branching processes with mutations are also used in epidemiology. Epidemics, and especially their onset, can be modeled by birth and death processes, where particles are infected hosts, births are disease transmissions and deaths are recoveries or actual deaths.
In \cite{Stadler05052011}, Stadler provides a statistical method for the inference of transmission rates and of the reproductive value of epidemics in a birth and death model with mutations.
In \cite{Lambert-Trapman}, Lambert \& Trapman enriched the transmission tree with Poissonian marks modeling detection events of hospital patients infected by an antibiotic-resistant pathogen. They provided an inference method based on the knowledge of times spent by patients at the hospital at the detection of the outbreak.

Let us also mention the existence of models, e.g. \cite{Gani_Yeo1965}, of phage reproduction within a bacterium by a (possibly time-inhomogeneous) birth and death process with Poissonian mutations, where particles model phage in the vegetative phase (DNA strands in the bacterium without protein coating) and death is interpreted as phage maturing (reception of protein coating).

In ecology, the neutral theory of biodiversity \cite{Hubbell} gives a prediction of the diversity patterns, in terms of species abundance distributions, that are generated by individual-based models where speciation is caused by mutation or by immigration from mainland. Usually, the underlying genealogical models are assumed to keep the population size constant through time, as in the Moran or Wright-Fisher models, and so have the same well-known properties as models in mathematical population genetics (e.g., Ewens sampling formula), with a different interpretation. See \cite{Haegeman-Etienne,Amaury-Immig-Mut} for cases where this assumption is relaxed in favor of the branching property.
\\

In this paper, we are first interested in the allelic partition of the population and more precisely in properties about the \emph{frequency spectrum} $(M_t^{i,a},i\geq1)$, where $M_t^{i,a}$ is the number of distinct types younger than $a$ (i.e., whose original mutation appeared after $t-a$) carried by exactly $i$ particles  at time $t$.
This kind of question was first studied by Ewens \cite{Ewens1972} who discovered the well known `sampling formula' named after him and which describes the law of the allelic partition for a Wright-Fisher model with neutral mutations.

In our models, it is not possible to obtain a counterpart of Ewens sampling formula but we obtain different kinds of results concerning the frequency spectrum $(M_t^{i,a},i\geq1)$. First, we get a closed-form formula for the expected frequency spectrum, even in the non-Markovian cases. Second, we get pathwise convergence results as $t\rightarrow +\infty$ on the survival event, of the relative abundances of types. Third, we investigate the order of magnitude of the sizes of the largest families at time $t$ and of the ages of oldest types at time $t$, as $t\rightarrow +\infty$, and show convergence in distribution of these quantities properly rescaled. Several regimes appear, depending on whether the \emph{clonal process}, which is the process counting particles of a same type, is subcritical, critical or supercritical.\\

We do not know of previous mathematical studies, other than ours, on branching processes with Poissonian mutations, but there are several existing mathematical results on branching models with mutations at birth that we now briefly review.

In discrete time, Griffiths and Pakes \cite{Griffiths1988a} studied the case of a Bienaym\'e-Galton-Watson (BGW) process where at each generation, all particles mutate independently with some probability $u$. The authors obtained properties about the number of alleles/types in the population, about the time of last mutation in the (sub)critical case and about the expected frequency spectrum. In \cite{Bertoin2009,Bertoin2010}, Bertoin considers an infinite alleles model with neutral mutations in a subcritical or critical BGW-process where particles independently give birth to a random number of clonal and mutant children according to the same joint distribution. In \cite{Bertoin2009}, the tree of alleles is studied, where all particles of a common type are gathered in clusters and the law of the allelic partition of the total population is given by describing the joint law of the sizes of the clusters and of the numbers of their mutant children. In \cite{Bertoin2010}, Bertoin obtains the joint 
convergence of the sizes of allelic families in the limit of large initial population size and small mutation rate.

In continuous time, Pakes \cite{Pakes1989} studied Markovian branching processes and gave the counterpart in the time-continuous setting, of properties found in the previously cited paper \cite{Griffiths1988a}. In particular, his results about the frequency spectrum and the ``limiting frequency spectrum'' are similar to ours, stated in Section \ref{small_families}.
Recently, Maruvka et al. \cite{Maruvka2010245,Maruvka2011} have considered the linear birth
and death process with Poissonian mutations.
Actually, they
rather studied a PDE satisfied by a concentration $n(x,t)$ which can be seen as (but is not
proved to be) a deterministic approximation to the number of families of size $x$ at time $t$.
It is remarkable that this PDE has a steady concentration $\bar{n}(x)$, whose behavior as
$x\to\infty$ is comparable to the asymptotic behavior of the relative numbers of families of size $m$ as $m\to+\infty$ in the discrete model studied here and in \cite{Griffiths1988a}. In
the monography \cite{Taib1992}, Ta\"ib is interested in general branching processes known as
Crump-Mode-Jagers processes (see \cite{Jagers_BP_with_bio,Jagers1984a} and references therein)
where mutations still occur at birth but with a probability that may depend for example on the
age of the mother. He obtained limit theorems about the frequency spectrum by using random
characteristics techniques but in most cases, limits cannot be explicitly computed. Some of
our results in Model I are applications of Ta\"ib's, but use techniques specific to splitting
trees to yield explicit formulae. We have refrained to apply results of Ta\"ib on the
convergence in distribution of properly rescaled sizes of largest families, on the validity of
which we have doubts in the case of supercritical clonal processes (see last section).

 The paper is organized as follows. In Section \ref{The models}, we define the models and give some of their properties that will be useful to state the main results. Section \ref{small_families} is devoted to the study of the frequency spectrum (small families). Finally, in Section \ref{Grandes_familles}, we give the results about ages of the oldest families and about sizes of the largest ones.

Notice that in this paper, most of the results are stated for linear birth and death processes in order to simplify the notation. Most of them are also true with general life length distributions and are proved in Chapter 3 of the PhD thesis \cite{TheseMathieu} for Model I, and in \cite{Champagnat2010,Champ_Lamb_2} for Model II. Specific effort has been put on finding a unifying formulation for our results as soon as it seemed possible.

 \section{The models}\label{The models}
 \subsection{Model without mutations}\label{without_mutation}
 We first define the model without mutations and give some of its properties. Afterwards, we will explain the two mutation mechanisms that we consider in this paper.

 As a population model, we consider \emph{splitting trees} \cite{Geiger1996,Geiger1997,Amaury_contour_splitting_trees}, that is,
\begin{itemize}
\item At time $t=0$, the population starts with one progenitor;
\item All particles have i.i.d. reproduction behaviors;
\item Conditional on her birth date $\alpha$ and her life length $\zeta$, each particle gives birth at a constant rate $b\in(0,\infty)$ during $(\alpha,\alpha+\zeta)$, to a single particle at each birth event.
\end{itemize}
It is important to notice that the common law of life lengths can be as general as possible.
Let $Z=(Z(t),t\geq0)$ be the process counting the number of extant particles through time. We denote the lifespan distribution by $\Lambda(\cdot)/b$ where $\Lambda$ is a finite positive measure on $(0,+\infty]$ with total mass $b$ and called \emph{lifespan measure} \cite{Amaury_contour_splitting_trees}.

The total population process $Z$ belongs to a large class of branching processes called \emph{Crump-Mode-Jagers} or \emph{CMJ processes}. In these processes, also called general branching processes~\cite{Jagers_BP_with_bio,Jagers1984a}, one associates with each particle $x$ in the population a non-negative r.v. $\lambda_x$ (her life length), and a point process $\xi_x$ called birth point process. One assumes that the sequence $(\lambda_x,\xi_x)_x$ is i.i.d. but $\lambda_x$ and $\xi_x$ are not necessarily independent.
 Then, the CMJ-process is defined as
 $$Z(t)=\sum_{x}\1{\sigma_x\leq t<\sigma_x+\lambda_x},\ \ t\geq0$$
 where for any particle $x$ in the population, $\sigma_x$ is her birth time.

 In our particular case, the common distribution of lifespans is $\Lambda(\cdot)/b$ and conditional on her lifespan, the birth point process of a particle is distributed as a Poisson point process during her life. We can say
that the CMJ-process $Z$ is \emph{homogeneous} (constant birth
rate) and \emph{binary} (births occur singly). We will say that $Z$ is \emph{subcritical, critical or supercritical} according to whether the mean number of children per particle 
\begin{equation}\label{defi_m}
m:=\intpos u\Lambda(\dif u) 
\end{equation}
is less than, equal to or greater than 1.

The advantage of homogeneous, binary CMJ-processes is that they enable explicit computations,
e.g., about one-dimensional marginals of $Z$ (see forthcoming Proposition
\ref{1d_marginal_Xi}).  More precisely, for $\lambda\geq0$, define
\begin{equation}
  \label{eq:def-psi}
  \psi(\lambda):=\lambda-\int_{(0,\infty]}(1-e^{-\lambda u})\Lambda(\dif u)
\end{equation}
and let $r$ be the greatest root of $\psi$. Notice that $\psi$ is convex, $\psi(0)=0$ and $\psi'(0)=1-m$. As a consequence,
\begin{equation}\label{prop_r}
\left\{\begin{array}{cl}                                                                                                     
                                                                                                      
r=0& \textrm{ if }Z \textrm{ is subcritical or critical,}\\
r>0&\textrm{ if }Z \textrm{ is supercritical.}
 \end{array}\right.                                                                                                 \end{equation}
Let $W$ be the so-called \emph{scale function} \cite[p.194]{Levy_processes} associated with $\psi$, that is, the unique increasing continuous function $(0,\infty)\rightarrow(0,\infty)$ satisfying
\begin{equation}\label{defi_W}
  \intpos W(x)e^{-\lambda x}\dif x=\frac{1}{\psi(\lambda)}, \quad \lambda>r.
\end{equation}

\begin{prop}[Lambert \cite{Amaury_contour_splitting_trees, Amaury-Immig-Mut}]\label{1d_marginal_Xi}
The one-dimensional marginals of $Z$ are given by
\begin{equation*}\label{loi_Xt}
  \p(Z(t)=0)=1-\frac{W'(t)}{bW(t)}
\end{equation*}
and for $n\ge 1$,
\begin{equation*}\label{loi_Xt_2}
  \p(Z(t)=n)=\left(1-\frac{1}{W(t)}\right)^{n-1}\frac{W'(t)}{bW(t)^2}.
\end{equation*}
 In other words, conditional on being non-zero, $Z(t)$ is distributed as a geometric r.v. with success probability $1/W(t)$.
 \end{prop}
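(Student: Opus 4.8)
\emph{Proof strategy.} I would follow Lambert's contour–process approach. Truncate the splitting tree at level $t$, i.e.\ stop the life of every individual still alive at time $t$; then $Z(t)$ is the number of individuals of the truncated tree whose life segment reaches height $t$. By the contour theorem of \cite{Amaury_contour_splitting_trees}, the truncated tree is encoded by a process $C^{(t)}=(C^{(t)}_s,\,s\ge0)$ which, started from the truncated ancestral lifetime $\zeta\wedge t$ (where $\zeta$ has law $\Lambda(\cdot)/b$), runs like the spectrally positive Lévy process with Laplace exponent $\psi$ — with any upward jump that would overshoot $t$ replaced by a jump to $t$ — and is absorbed at its first hitting time of $0$; moreover each visit of $C^{(t)}$ to the level $t$ is in one-to-one correspondence with an individual alive at time $t$. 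Hence $Z(t)$ equals the number of visits of $C^{(t)}$ to $t$ before absorption at $0$ (counting time $0$ itself when $\zeta\ge t$), and in particular $\{Z(t)=0\}=\{\zeta<t\}\cap\{C^{(t)}\text{ hits }0\text{ before }[t,\infty)\}$.

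The next step is to extract the law of this number of visits from the strong Markov property and the classical two–sided exit formulae for spectrally one–sided Lévy processes, all expressed through the scale function $W$ (see \cite{Levy_processes}). A sign change and a translation turn the relevant exit problem for the spectrally positive process $C$ into the standard one for the spectrally negative process $-C$, giving, for $0\le x\le t$,
\begin{equation*}
\p_x\bigl(C\text{ reaches }0\text{ before }[t,\infty)\bigr)=\frac{W(t-x)}{W(t)}.
\end{equation*}
Evaluating at $x=\zeta$ and integrating over $\zeta<t$ yields
\begin{equation*}
\p(Z(t)=0)=\frac{1}{bW(t)}\int_{(0,t)}W(t-x)\,\Lambda(\dif x),
\end{equation*}
while letting $x\uparrow t$ in the same identity shows that, started from $t$, the process $C^{(t)}$ returns to $t$ before absorption with probability $\rho:=1-1/W(t)$ (here $W(0+)=1$). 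Applying the strong Markov property successively at the visit times of $t$, the number of visits is, conditionally on being positive, geometric with success probability $1/W(t)$, so that for $n\ge1$
\begin{equation*}
\p(Z(t)=n)=\bigl(1-\p(Z(t)=0)\bigr)\Bigl(1-\tfrac1{W(t)}\Bigr)^{n-1}\frac1{W(t)}.
\end{equation*}

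It remains to identify $1-\p(Z(t)=0)$ with $W'(t)/(bW(t))$, i.e.\ to check the renewal identity $W'(t)=bW(t)-\int_{(0,t)}W(t-x)\,\Lambda(\dif x)$. This is a short Laplace–transform computation from the definition $\psi(\lambda)\intpos e^{-\lambda x}W(x)\,\dif x=1$ and $W(0+)=1$: one has $\lambda\widehat W(\lambda)-1=(\lambda-\psi(\lambda))\widehat W(\lambda)$ and $\lambda-\psi(\lambda)=\int_{(0,\infty]}(1-e^{-\lambda u})\,\Lambda(\dif u)$, which inverts to exactly that identity. Substituting it into the two displays above produces the stated formulae and the geometric description conditionally on $\{Z(t)>0\}$. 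The genuinely delicate point is the first step: making the contour representation of the truncated tree rigorous and matching visits of $C^{(t)}$ to $t$ with individuals alive at $t$, including the bookkeeping of the ancestral lifetime and of a possible atom of $\Lambda$ at $+\infty$ (a killing rate for $C$); once that is in hand, Steps 2–3 are routine fluctuation theory. A self-contained analytic alternative would be to write and solve the renewal equation satisfied by $\Esp[s^{Z(t)}]$ coming from the branching structure (the ancestor lives a time $\zeta$ and spawns, along $(0,\zeta)$, a Poisson($b$) number of i.i.d.\ daughter subtrees), but this is no shorter and less illuminating, so I would keep the contour approach as the main line.
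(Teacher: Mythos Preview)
Your proposal is correct and follows exactly the contour--process route of \cite{Amaury_contour_splitting_trees}, which is precisely how the paper treats this proposition: it is stated as a cited result with no proof reproduced here. The identification $W(0+)=1$, the two--sided exit formula $W(t-x)/W(t)$, the geometric structure from the strong Markov property at successive visits to $t$, and the Laplace verification of $W'(t)=bW(t)-\int_{(0,t)}W(t-x)\,\Lambda(\dif x)$ are all sound and match Lambert's original argument.
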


If $\ext:=\left\{Z(t)\underset{t\rightarrow\infty}\longrightarrow 0\right\}$ denotes the extinction event of $Z$,
according to \cite{Amaury_contour_splitting_trees}, as a consequence of the last proposition,
 $$\p(\ext)=1-\frac{r}{b}.$$
 Thus, thanks to \eqref{prop_r}, extinction occurs a.s. when $Z$ is (sub)critical and
 $\p(\ext^c)>0$ when it is supercritical.
 
 The following proposition justifies the fact that $r$ is called the \emph{Malthusian parameter} of the population in the supercritical case.

 \begin{prop}[Lambert \cite{Amaury_contour_splitting_trees}] If $m>1$, conditional on the survival event $\ext^c$,
  \label{convergence_Amaury}

  \begin{equation}\label{conv_surcrit}e^{-r t}Z(t)\underset{t\rightarrow\infty}\longrightarrow \mathcal E\as
  \end{equation}
  where $\mathcal E$ is exponential with parameter $\pse$.
  \end{prop}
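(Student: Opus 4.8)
\emph{Proof proposal.} The plan is to produce the almost sure limit as the terminal value of a nonnegative martingale, and then to identify its law from the explicit one-dimensional marginals of Proposition~\ref{1d_marginal_Xi}. The first ingredient I would record is the sharp asymptotics of the scale function. Since $m>1$ forces $r>0$, and $\psi(r)=0$ with $\psi$ convex and $C^1$, $\psi'(r)>0$, one has $\psi(\lambda)\sim\psi'(r)(\lambda-r)$ as $\lambda\downarrow r$, so by \eqref{defi_W} the Laplace transform $\int_0^\infty e^{-(\lambda-r)x}\,e^{-rx}W(x)\,\dif x=1/\psi(\lambda)\sim\big(\psi'(r)(\lambda-r)\big)^{-1}$ as $\lambda\downarrow r$, and a standard Tauberian/renewal argument (carried out in \cite{Amaury_contour_splitting_trees}) yields $e^{-rt}W(t)\to 1/\psi'(r)$, equivalently $W(t)^{-1}\sim\psi'(r)e^{-rt}$. (Sanity check: Proposition~\ref{1d_marginal_Xi} gives $\p(Z(t)=0)=1-W'(t)/(bW(t))$, which increases to $\p(\ext)=1-r/b$, so $W'(t)/W(t)\to r$, consistent with $W(t)\asymp e^{rt}$.)

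The second ingredient is the almost sure convergence. In the birth--death case, for which the statement is phrased, $\Esp[Z(t)]=e^{rt}$, so $(e^{-rt}Z(t))_{t\ge0}$ is a nonnegative $(\mathcal F_t)$-martingale and therefore converges $\p$-a.s.\ to some finite $M_\infty\ge0$, with $M_\infty=0$ on $\ext$. For a general lifespan measure the normalization $e^{-rt}Z(t)$ is no longer a martingale, and there I would use the jumping chronological contour representation of the splitting tree: the contour is a spectrally positive Lévy process with Laplace exponent $\psi$, for which $e^{-r\,\cdot}$ is a martingale and from whose level crossings $Z(t)$ is read off — this is the route of \cite{Amaury_contour_splitting_trees}; alternatively one may invoke the strong law of large numbers for supercritical CMJ processes. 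Either way one obtains $e^{-rt}Z(t)\to M_\infty$ a.s., with $M_\infty=0$ on $\ext$.

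Finally I would identify the law of $M_\infty$. By Proposition~\ref{1d_marginal_Xi}, conditionally on $\{Z(t)\neq0\}$, $Z(t)$ is geometric with success probability $1/W(t)$, so for $\lambda>0$
\[ \Esp\left[e^{-\lambda e^{-rt}Z(t)}\right]=\p(Z(t)=0)+\p(Z(t)\neq0)\,\frac{W(t)^{-1}e^{-\lambda e^{-rt}}}{1-(1-W(t)^{-1})e^{-\lambda e^{-rt}}}. \]
Letting $t\to\infty$, using $\p(Z(t)=0)\to 1-r/b$, the expansion $e^{-\lambda e^{-rt}}=1-\lambda e^{-rt}+o(e^{-rt})$ and the first step, the fraction tends to $\psi'(r)/(\lambda+\psi'(r))$, while by bounded convergence together with the a.s.\ convergence of the second step the left-hand side tends to $\Esp[e^{-\lambda M_\infty}]$; hence
\[ \Esp\left[e^{-\lambda M_\infty}\right]=\Big(1-\frac{r}{b}\Big)+\frac{r}{b}\cdot\frac{\psi'(r)}{\lambda+\psi'(r)}, \]
which is the Laplace transform of $(1-r/b)\delta_0+(r/b)\,\mathrm{Exp}(\psi'(r))$. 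In particular $\p(M_\infty=0)=1-r/b=\p(\ext)$; since $\ext\subseteq\{M_\infty=0\}$ a.s., this forces $\{M_\infty=0\}=\ext$ a.s., so conditionally on $\ext^c$ the limit $\mathcal E:=M_\infty$ is exponential with parameter $\psi'(r)$, as claimed.

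I expect the genuine obstacle to be the second step in the non-Markovian case: exhibiting a suitable martingale (as $e^{-rt}Z(t)$ is no longer one) and, above all, upgrading convergence to \emph{almost sure} convergence toward a limit that is non-degenerate on $\ext^c$, i.e.\ ruling out that the martingale limit vanishes on a non-negligible part of the survival event. Once a.s.\ convergence to a finite limit is granted, the identification step pins the law down with no further uniform-integrability input (the integrands are bounded, and even the finiteness and the mass of $M_\infty$ at $0$ come out of the Laplace transform), so essentially all the probabilistic substance lies in the a.s.\ convergence — trivial in the Markov case, and the content of \cite{Amaury_contour_splitting_trees} in general. A minor auxiliary point is the Tauberian/renewal estimate for $W$ in the first step.
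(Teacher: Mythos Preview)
Your proposal is correct and matches the route the paper indicates: the paper does not give its own proof but simply records that convergence in distribution is established in \cite{Amaury_contour_splitting_trees} (via the explicit one-dimensional marginals of Proposition~\ref{1d_marginal_Xi}, exactly the Laplace-transform computation you carry out) and that the upgrade to almost sure convergence follows from Nerman's strong law for supercritical CMJ processes \cite{Nerman_supercritical_CMJ}, which is precisely the ``strong law of large numbers for supercritical CMJ processes'' you invoke in your second step for the non-Markovian case. One small clarification: the proposition is stated for general splitting trees, not only for the exponential case, so your martingale argument for $e^{-rt}Z(t)$ covers only a special case and the Nerman reference is the operative one in general---which you already anticipate in your discussion of the ``genuine obstacle''.
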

In fact, convergence in distribution is proved in \cite{Amaury_contour_splitting_trees} and a.s. convergence holds according to \cite{Nerman_supercritical_CMJ} (see \cite[p.285]{Splitting_trees_Immig}).

 \subsection{Two mutation models I and II}
We now assume that particles in the population carry  types, also called \emph{alleles}. We consider two population models where mutations appear in different ways. In each case, we will make the assumption of \emph{infinitely many alleles}, that is, to every mutation event is associated a different type, so that every type appears only once. We will also assume that mutations are \emph{neutral}, that is, they do not change the way particles die and reproduce.

 In Model I, mutations occur at birth. More precisely, there is some $p\in(0,1)$ such that at each birth event, independently of all other particles, the newborn is a clone of her mother with probability $p$ and a mutant with probability $1-p$. An illustration is given in Figure \ref{figure_spectre des frequencesI}.

 In Model II, particles independently experience mutations during their lives at constant rate $\theta>0$. In particular, in contrast with Model I, particles can change type several times during their lifetime, but always bear at birth the same type as their mother at this very time. An illustration is given in Figure \ref{figure_spectre des frequencesII}.\\
   \begin{figure}[!ht]
\begin{center}
\includegraphics{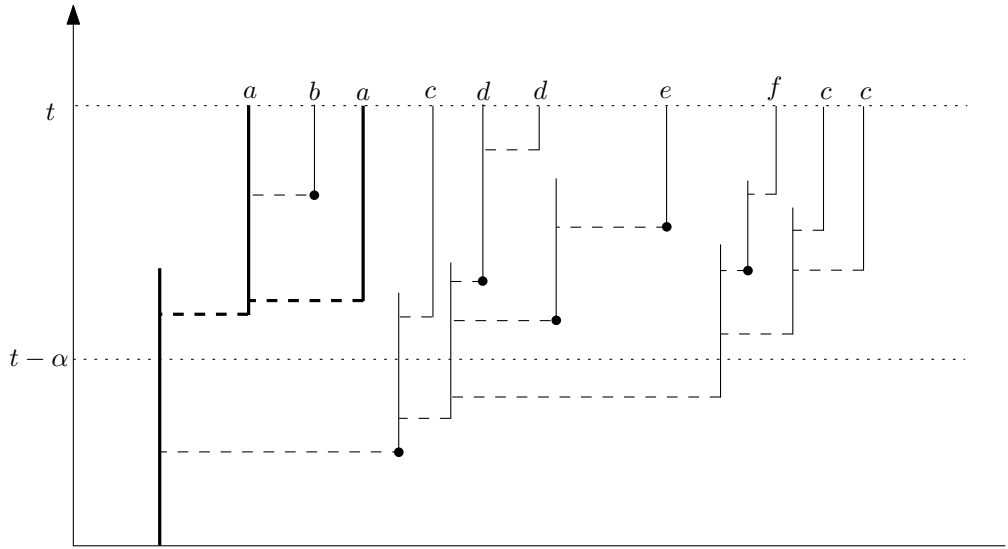}
\caption{An example of a splitting tree in Model I and of the allelic partition of the whole extant population at time $t$. Vertical axis is time and horizontal axis shows filiation (horizontal lines have zero length). Full circles represent mutations at birth and thick lines, the clonal splitting tree of the ancestor up to time $t$. The different letters are the alleles of alive particles at time $t$.}
\label{figure_spectre des frequencesI}
\end{center}
\end{figure}

    \begin{figure}[!ht]
\begin{center}
\includegraphics{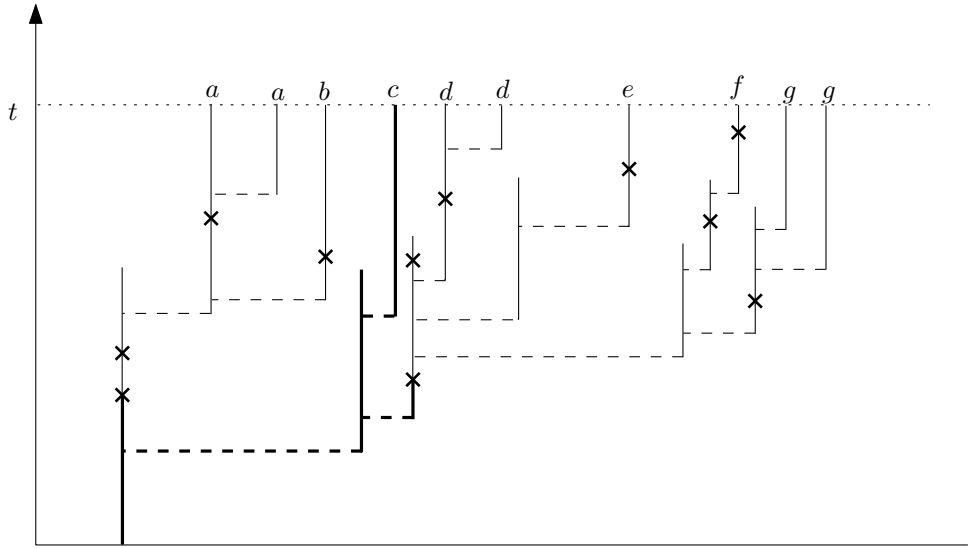}
\caption{An example of a splitting tree with mutations in Model II and of the allelic partition of the whole extant population at time $t$. Crosses represent mutations and thick lines, the clonal splitting tree of the ancestor up to time $t$. The different letters are the alleles of alive particles at time $t$. }
\label{figure_spectre des frequencesII}
\end{center}
\end{figure}

 In what follows, an important role will be played by the \emph{clonal process}, generically denoted $Z_\star$, counting, as time passes, the number of particles bearing the same type as the progenitor of the population at time 0.
It can easily be seen that the genealogy of a clonal population is again a splitting tree, so
that $Z_\star$ is also a homogeneous, binary CMJ process. We denote by $b_\star$ its birth
rate, by $\psi_\star$ the associated convex function as in \eqref{eq:def-psi} and by $\Weto$ the non-negative function with Laplace transform $1/\psi_\star$. Furthermore, when the clonal population is supercritical, \emph{i.e.} when $\psi_\star'(0+)<0$, we denote by $r_\star$ its Malthusian parameter, which is the only nonzero root of $\psi_\star$. We will sometimes need to have this generic notation depend on the model considered: $Z_p, \psi_p, W_p, r_p$ for Model I, and $Z_\theta, \psi_\theta, W_\theta, r_\theta$ for Model II.

Concerning Model I, it can be seen \cite{TheseMathieu} that the clonal splitting tree has the same life lengths as the original splitting tree and birth rate $b_p=b(1-p)$, so that its lifespan measure is $(1-p)\Lambda$ and
 $$\psi_p(\lambda)=p\lambda+(1-p)\psi(\lambda)\quad \lambda>0.$$
 In particular, as in \eqref{defi_m}, the clonal population is subcritical, critical or supercritical according to whether  $m(1-p)$ is less than, equal to or greater than 1. It should be noted that there is no closed-form formula  for $W_p$.

Concerning Model II, it can be seen \cite{Champagnat2010} that the clonal splitting tree has birth rate $b_\theta=b$ and life lengths distributed as $\min(X,Y)$ where $X$ has probality distribution $\Lambda(\cdot)/b$ and $Y$ is an independent exponential r.v. with parameter $\theta$. Then we get
 $$\psi_\theta(\lambda)=\frac{\lambda\psi(\lambda+\theta)}{\lambda+\theta}\quad\lambda>0.$$
In particular, $r_\theta=r-\theta$ and the clonal population is subcritical, critical or supercritical according to whether  $r$ is less than, equal to or greater than $\theta$. It can also be proved that $W$ and $W_\theta$ are differentiable and that their derivatives are related via 
 $$W_\theta'(x)=e^{-\theta x}W'(x)\quad x\geq0,
 $$
 with the requirement that $W_\theta(0)=1.$
\subsection{Exponential case}
An interesting  case that we will focus on is  the \emph{exponential (or Markovian) case}, when the common distribution of life lengths is exponential with parameter $d$ (with the convention that lifespans are a.s. infinite if $d=0$), that is,
$\Lambda(\dif u)=bde^{-du}\dif u$ or $\Lambda(\dif u)=b\delta_\infty(\dif u)$.
In that case, $Z$ is respectively a linear birth and death process with birth rate $b$ and death rate $d$ or a pure birth process (or Yule process) with parameter $b$.

In this case, $Z$ and $Z_\star$ are Markov processes and the quantities defined in Section \ref{without_mutation} are computable. Indeed, we have
\begin{gather}
\psi(\lambda)=\frac{\lambda(\lambda-b+d)}{\lambda+d},\quad r=b-d,\notag\\
m=1-\psi'(0)=\frac{b}{d}\ \textrm{ and }\ \pse=1-\frac{d}{b}.\label{m_et_psi_Expo}
\end{gather}
It is also possible to compute the function $W$, defined by \eqref{defi_W}, while it is generally unknown. From \cite[p.~393]{Amaury_contour_splitting_trees}, we have
\begin{equation*}\label{Wcexpo}
  W(x)=\left\{
  \begin{array}{cl}
  \frac{b e^{r x}-d}{r} &\textrm{ if }b\neq d\\
  1+b x &\textrm{ if }b=d
  \end{array}\right.\quad x\geq0
    \end{equation*}
 and in all cases
\begin{equation*}\label{deriveeWcexpo}
W'(x)=b e^{r x}\quad x\geq0.
\end{equation*}

The same results hold for $\Weto$,
by respectively replacing $b$, $d$ and $r$ by
\begin{subequations}
\renewcommand{\theequation}{\theparentequation-\Roman{equation}}
\begin{equation}
\beto:= b(1-p),\quad \deto:=d,\quad\reto:=r-bp
\end{equation}
in Model I and by
\begin{equation}
\beto:= b,\quad \deto:=d+\theta,\quad \reto:=r-\theta
\end{equation}
in Model II.
\end{subequations}

We will sometimes state results in the total generality of splitting trees, in which case an equation numbered ( -I) (resp. ( -II)) refers to Model I (resp. Model II), as done previously. However, we will most of the time focus on the exponential case, in which case we will as soon as possible use the unified notation using $\star$'s. We will notify when the results can be generalized and will give precise references.

\begin{rem}
In the exponential case, notice that Models I and II are two (incompatible) cases of a more general class of linear
birth and death processes with mutations, where particles mutate spontaneously at rate
$\theta$, die at rate $d$, give birth at rate $b$, and at each birth event: with probability
$p_2$, the mother and the daughter both mutate (and bear either the same new type or two
different new types); with probability $p_1$, the daughter (only) mutates; with probability
$p_0=1-p_1-p_2$, none of them mutates. Then Model I corresponds to the case when $\theta=p_2=0$ and Model II to the case when $p_1=p_2=0$. The case studied by Pakes in \cite{Pakes1989} corresponds to $\theta=0$, $p_0=u^2$, $p_1 = 2u(1-u)$ and $p_2= (1-u)^2$. It is still an open question to check whether, when our results hold  for both Models I and II with the unified notation, they hold for all linear birth and death processes with mutations.
\end{rem}

 \section{Small families}\label{small_families}
 Recall that a \emph{family} is a maximal set of particles bearing the same type at the same
 given time.
 In this section, we are interested in results about \emph{small families} that is, families whose sizes and ages are fixed, in opposition to those of Section \ref{Grandes_familles} which concern asymptotic properties of the largest and oldest ones.

More precisely, we give properties of the allelic partition of the entire population by studying the \emph{frequency spectrum} $(M_t^{i,a},i\geq1)$ where $M_t^{i,a}$ denotes the number of distinct types, whose ages are less than $a$ at time $t$, carried by exactly $i$ particles at time $t$.
Notice that $M_t^{i,t}$ is simply the number of alleles carried by $i$ particles at time $t$ (regardless of their ages).

 For instance, in Figure \ref{figure_spectre des frequencesI}, the frequency spectrum $(M_t^{i,t},i\geq1)$ is $(3,2,1,0,\dots)$ because three alleles ($B,E,F$) are carried by one particle, $A$ and $D$ are carried by two particles and $C$ is the only allele carried by three particles.
 Moreover, if we only consider families with ages less than $a$, $(M_t^{i,a},i\geq1)$ equals $(3,1,0,\dots)$ because alleles $A$ and $C$ appear in the population before time $t-a$.
 Similarly, in Figure \ref{figure_spectre des frequencesII}, the frequency spectrum in Model II is $(4,3,0,\dots)$.

In the case of branching processes, there is no closed-form formula available for the law of the frequency spectrum as it is the case for the Wright-Fisher model thanks to Ewens sampling formula \cite{Ewens1972}.
 Nevertheless, we obtained for both mutation models an exact computation of the expected frequency spectrum and almost sure asymptotic behavior of this frequency spectrum as $t\to+\infty$.

  \subsection{Expected frequency spectrum}\label{moyenne spectre allelique}
We first give an exact expression of the expected frequency spectrum at any time $t$.

For $0<a<t$ and $i\geq1$, we denote by $M_t^{i,\dif a}$ the number of types carried by $i$ particles at time $t$ and with ages in $[a-\dif a,a]$.
The following proposition yields its expected value.

\begin{prop}\label{wxcv}
For $0<a<t$ and $i\geq1$, we have
\begin{subequations}\label{expect_spectrum}
\renewcommand{\theequation}{\theparentequation-\Roman{equation}}
\begin{equation}\label{expect_spectrumI}
\Esp[M_t^{i,\dif a}]=\frac{p}{b(1-p)}W'(t-a)\left(1-\frac{1}{W_p(a)}\right)^{i-1}\frac{W_p'(a)}{W_p^2(a)}\ \dif a.\end{equation}
\begin{equation}\label{expect_spectrumII}
\Esp[M_t^{i,\dif a}]=\frac{\theta  W'(t)}{b}\left(1-\frac{1}{W_\theta(a)}\right)^{i-1}\frac{e^{-\theta a}}{W_\theta^2(a)}\dif a
\end{equation}
\end{subequations}
In the exponential case, both expressions read as
\begin{equation}\label{expect_spectrum_expo}
\Esp[M_t^{i,\dif a}]=(r-\reto)e^{rt}\left(1-\frac{1}{\Weto(a)}\right)^{i-1}\frac{e^{-(r-\reto)a}}{\Weto^2(a)}\dif a.
\end{equation}

\end{prop}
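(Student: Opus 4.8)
\emph{Strategy.} The plan is to decompose $M_t^{i,\dif a}$ along the mutation events that found the families being counted. A type whose age at time $t$ lies in $[a-\dif a,\,a]$ is founded by exactly one mutation, occurring at an absolute time in $[t-a-\dif a,\,t-a]$, and the particles carrying it at time $t$ are precisely the individuals, alive at time $t$, of the \emph{clonal family} issued from that mutation. Hence $M_t^{i,\dif a}$ is the number of mutation events occurring in $[t-a-\dif a,\,t-a]$ whose clonal family contains exactly $i$ individuals at time $t$. By neutrality and the branching property of splitting trees, conditionally on a mutation at time $s$ the induced clonal family is a homogeneous binary splitting tree independent of the population's history up to time $s$; therefore a first-moment (many-to-one) computation over the point process of birth/mutation events gives the product form: $\Esp[M_t^{i,\dif a}]$ equals the mean intensity of founding mutations at time $t-a$, times the probability that the clonal process they start has exactly $i$ individuals $a$ units of time later, times $\dif a$. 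Both factors are then supplied by Proposition~\ref{1d_marginal_Xi}.

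\emph{Carrying it out.} Proposition~\ref{1d_marginal_Xi} says that $Z(s)$, conditioned to be positive, is geometric with success probability $1/W(s)$, whence $\Esp[Z(s)]=W'(s)/b$. In Model~I, mutations are an independent $p$-thinning of birth events, and birth events at time $s$ have intensity $b\,\Esp[Z(s)]\,\dif s=W'(s)\,\dif s$, so founding mutations at time $s$ have intensity $p\,W'(s)\,\dif s$; a mutant being a fresh newborn, its clonal descendance is an independent copy of $Z_p$ (birth rate $b(1-p)$, original lifespan law), and Proposition~\ref{1d_marginal_Xi} applied to $Z_p$ gives $\p(Z_p(a)=i)=(1-1/W_p(a))^{i-1}W_p'(a)/(b(1-p)W_p(a)^2)$; the product is exactly \eqref{expect_spectrumI}. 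In Model~II, each living particle mutates at rate $\theta$, so founding mutations at time $s$ have intensity $\theta\,\Esp[Z(s)]\,\dif s=(\theta/b)W'(s)\,\dif s$; in the exponential case the residual lifetime of the mutant is again exponential, so its clonal line is an independent copy of $Z_\theta$, and using $W'(x)=be^{rx}$ the product $(\theta/b)W'(t-a)\,\p(Z_\theta(a)=i)\,\dif a$ becomes \eqref{expect_spectrumII}. In the exponential case the two statements unify: with $W'(x)=be^{rx}$ and $\Weto'(x)=\beto e^{\reto x}$, and since $r-\reto=bp$ in Model~I and $r-\reto=\theta$ in Model~II, the founding-mutation intensity at time $t-a$ is $(r-\reto)e^{r(t-a)}\,\dif a$, so that
\[
\Esp\bigl[M_t^{i,\dif a}\bigr]=(r-\reto)\,e^{r(t-a)}\,\p\bigl(\Zeto(a)=i\bigr)\,\dif a ;
\]
inserting $\p(\Zeto(a)=i)=(1-1/\Weto(a))^{i-1}e^{\reto a}/\Weto(a)^2$ (Proposition~\ref{1d_marginal_Xi} for $\Zeto$) and collecting the exponentials yields \eqref{expect_spectrum_expo}.

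\emph{Main obstacle.} The genuinely delicate case is Model~II for a general lifespan measure $\Lambda$. There a mutation strikes a particle at a strictly positive age, so after the mutation that particle keeps a \emph{size-biased} residual lifetime, and its clonal family is no longer distributed like $Z_\theta$ started afresh. One must therefore integrate, over the age $u$ of the mutant at its mutation time, the expected number of age-$u$ particles alive at time $t-a$ against the law of a clonal splitting tree whose root carries this modified lifetime; it is precisely this averaging that converts the naive factor $W'(t-a)$ into the factor $W'(t)$ appearing in \eqref{expect_spectrumII}. I would carry this out using the jumping contour process (a spectrally positive L\'evy process) of splitting trees together with the scale-function identities recalled in Section~\ref{The models}, and refer to \cite{Champ_Lamb_2,Champagnat2010,TheseMathieu} for the detailed computation. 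As a consistency check, integrating \eqref{expect_spectrum_expo} in $a$ over $(0,t)$ should recover $\Esp[M_t^{i,t}]$, the expected number of alleles carried by $i$ particles at time $t$.
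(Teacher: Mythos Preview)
Your argument is correct and follows essentially the same route the paper sketches: for Model~I you use the branching property together with the Poisson structure of births to write $\Esp[M_t^{i,\dif a}]$ as (mean number of mutations at time $t-a$) $\times\,\p(Z_p(a)=i)$, exactly as described in the paper via conditioning on $Z(t-a)$; for Model~II you handle the exponential case directly by memorylessness and, like the paper, defer the general case to the contour/coalescent-point-process machinery of \cite{Champagnat2010}. Your identification of the ``main obstacle'' (the non-fresh residual lifetime of the mutant in Model~II for general $\Lambda$, which is what turns $W'(t-a)$ into $W'(t)$) is precisely the point the paper addresses by invoking coalescent point processes.
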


In \cite{TheseMathieu}, \eqref{expect_spectrumI} is proved in the general case. Its proof uses the branching property and basic properties about Poisson processes. 
The main argument is that conditional on $Z(t-a)$, $M_t^{i,\dif a}$ is the sum of $Z(t-a)$ independent r.v. distributed as 
the number of mutants that appear in the population in a time interval $da$ and with $i$ clonal alive descendants at time $a$.
The proof of the general case of \eqref{expect_spectrumII} in \cite{Champagnat2010} is based on coalescent point processes.

The expected frequency spectrums $\Esp[M_t^{i,a}]$ can be obtained by integrating \eqref{expect_spectrumI} and \eqref{expect_spectrumII} over ages. Taking into account the contribution of the type of the progenitor, we can prove the following result.
\begin{cor}\label{moyenne spectre allelique 2}
For $a\leq t$ and $i\geq1$,

\begin{subequations}\label{expect_spectrum2}
\renewcommand{\theequation}{\theparentequation-\Roman{equation}}
{\setlength\arraycolsep{2pt}
\begin{eqnarray}
\Esp[M_t^{i,a}]&=&\frac{p}{b(1-p)}\int_0^a W'(t-x)\left(1-\frac{1}{W_p(x)}\right)^{i-1}\frac{W_p'(x)}{W_p^2(x)}\dif x\nonumber\\
&&\qquad\qquad\qquad\qquad+\frac{1}{b(1-p)}\left(1-\frac{1}{W_p(t)}\right)^{i-1}\frac{W_p'(t)}{W_p^2(t)}\1{a=t}\label{expect_spectrum2I}.
\end{eqnarray}}
{\setlength\arraycolsep{2pt}
\begin{eqnarray}
\Esp[M_t^{i,a}]&=& \frac{\theta  W'(t)}{b}\int_0^a\left(1-\frac{1}{W_\theta(x)}\right)^{i-1}\frac{e^{-\theta x}}{W_\theta^2(x)}\dif x\nonumber\\
&&\qquad\qquad\qquad\qquad+W(t)\left(1-\frac{1}{W_\theta(t)}\right)^{i-1}\frac{e^{-\theta t}}{W_\theta^2(t)}\1{a=t}\label{expect_spectrum2II}.
\end{eqnarray}}
\end{subequations}
In the exponential case,
$$\Esp[M_t^{i,a}]=(r-\reto)e^{rt}\int_0^a\left(1-\frac{1}{\Weto(x)}\right)^{i-1}\frac{e^{-(r-\reto)x}}{\Weto^2(x)}\dif x+\p(\Zeto(t)=i)\1{a=t}$$
\end{cor}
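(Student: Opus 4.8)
The plan is to obtain the corollary from Proposition~\ref{wxcv} by integrating the age density over $(0,a)$ and then adding the contribution of the one allele that is never produced by a mutation, namely the type carried by the progenitor at time~$0$.

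I would first record a pathwise decomposition of the frequency spectrum. At time~$t$ every extant allele is either the ancestral type of the progenitor or a type created by exactly one mutation event, which occurs at some time $s\in(0,t)$ and hence has age $t-s\in(0,t)$. A mutant type is counted in $M_t^{i,a}$ exactly when its age is at most $a$, i.e.\ when its mutation fell in $[t-a,t)$; and, since $a\le t$, the ancestral type --- to which one assigns by convention the age $t$, it having existed since time~$0$ --- is counted only when $a=t$. Moreover the number of particles bearing the ancestral type at time $t$ is, by the very definition of the clonal process, equal to $\Zeto(t)$. Consequently, for $a\le t$ and $i\ge1$,
\[
M_t^{i,a}=\int_{(0,a]}M_t^{i,\dif x}\;+\;\1{a=t}\,\1{\Zeto(t)=i}\as
\]
and, taking expectations,
\[
\Esp[M_t^{i,a}]=\int_0^a\Esp[M_t^{i,\dif x}]\;+\;\1{a=t}\,\p(\Zeto(t)=i).
\]

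It then remains to substitute the available formulas. Into the integral I would insert verbatim the density \eqref{expect_spectrumI} in Model~I, \eqref{expect_spectrumII} in Model~II, and \eqref{expect_spectrum_expo} in the exponential case. The boundary term I would compute by applying Proposition~\ref{1d_marginal_Xi} not to $Z$ but to the clonal homogeneous binary CMJ process $\Zeto$, with the model-dependent data recalled in Section~\ref{The models}: birth rate $\beto=b(1-p)$ and scale function $\Weto=W_p$ in Model~I, and $\beto=b$, $\Weto=W_\theta$ (together with $W_\theta'(x)=e^{-\theta x}W'(x)$) in Model~II; this yields $\p(\Zeto(t)=i)=(1-1/\Weto(t))^{i-1}\Weto'(t)/(\beto\Weto(t)^2)$, which one identifies with the $\1{a=t}$ terms of \eqref{expect_spectrum2I}--\eqref{expect_spectrum2II}. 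For the exponential case, plugging $W'(x)=be^{rx}$ and $\Weto'(x)=\beto e^{\reto x}$ with $r-\reto=bp$ (Model~I) or $r-\reto=\theta$ (Model~II) makes the prefactors $p/(b(1-p))$ and $\theta W'(t)/b$ both reduce to $(r-\reto)e^{rt}$ and the integrand reduce to $(1-1/\Weto(x))^{i-1}e^{-(r-\reto)x}/\Weto(x)^2$, while the boundary term becomes $\p(\Zeto(t)=i)$, which is the last displayed identity of the corollary.

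The single delicate point in this argument is the bookkeeping of the ancestral allele: one has to be certain that it is the unique allele not already counted by the mutation-driven density of Proposition~\ref{wxcv}, that its size at time $t$ is governed by the clonal process $\Zeto$ and not by a mutant-founded clonal family, and that it contributes to $M_t^{i,a}$ only at $a=t$. Granting this, the corollary is mere substitution and elementary algebra; all the analytic content sits in Proposition~\ref{wxcv} itself, which is proved elsewhere (by the branching property and Poisson-process computations in Model~I, and by coalescent point processes in Model~II).
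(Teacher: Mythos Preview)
Your argument is exactly the one the paper indicates: integrate the age density of Proposition~\ref{wxcv} over $(0,a)$ and add the contribution of the single progenitor allele, whose family size at time~$t$ is $\Zeto(t)$. The paper gives no further detail, so your proposal matches it.

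One caveat worth flagging. In Model~II you compute, via Proposition~\ref{1d_marginal_Xi} applied to the clonal process with $b_\theta=b$ and $W_\theta'(x)=e^{-\theta x}W'(x)$,
\[
\p(Z_\theta(t)=i)=\left(1-\frac{1}{W_\theta(t)}\right)^{i-1}\frac{e^{-\theta t}W'(t)}{b\,W_\theta(t)^2},
\]
and then assert that this ``identifies'' with the $\1{a=t}$ term of \eqref{expect_spectrum2II}. It does not: the printed formula carries the factor $W(t)$ where your computation gives $W'(t)/b$, and these differ in general (e.g.\ in the exponential case $W'(t)/b=e^{rt}$ while $W(t)=(be^{rt}-d)/r$). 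Your expression is the correct one: the sentence following the corollary says explicitly that the boundary term is the probability that the progenitor has $i$ alive clonal descendants, and the exponential specialisation displays it as $\p(\Zeto(t)=i)$; both agree with $W'(t)/b$, not $W(t)$. So the discrepancy is a typo in the stated formula \eqref{expect_spectrum2II}, not an error in your derivation, but you should note it rather than claim the two expressions coincide.
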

The second terms that appear in the r.h.s. correspond to the probabilities that the progenitor has $i$ alive clonal descendants at time $t$. In the exponential case, we left this probability as such, since its expression depends on the model.
It is also possible to get similar equations for the number of families with ages less than $a$ (resp. with size $i$) by summing over $i$ (resp. by taking $a=t$) in the last expressions.

\begin{rem}In the exponential case, when the process $Z$ is critical, that is, when $r=b-d=0$, for $a<t$,
$$\Esp[M_t^{i,a}]=\frac{|\reto|}{\beto}\frac{1}{i}\left(1-\frac{1}{\Weto(a)}\right)^{i},$$ which is reminiscent of Fisher log-series of species abundances \cite{Amaury-Immig-Mut}.
Surprisingly, this expression is independent of $t\in(a,\infty)$.
\end{rem}

From Corollary \ref{moyenne spectre allelique 2}, we deduce the asymptotic behavior of $\Esp[M_t^{i,a}]$ in the supercritical case.
\begin{prop}\label{wowy}We suppose that $m>1$. In the general case,
\begin{equation}\label{oasis}
\lim_{t\to+\infty}e^{-r t}\Esp[M_t^{i,a}]= \frac{r}{b}\frac{1}{\pse}J^{i,a}
\end{equation}
where, for Model I,
\begin{subequations}\label{Jia}
\renewcommand{\theequation}{\theparentequation-\Roman{equation}}
\begin{equation}\label{JiaI}
J^{i,a}:=\frac{p}{(1-p)}\int_0^a \left(1-\frac{1}{W_p(u)}\right)^{i-1}\frac{e^{-r u}W_p'(u)}{W_p^2(u)}\dif u
\end{equation}
and, for Model II,
\begin{equation}\label{JiaII}
J^{i,a}:=\theta\int_0^a\left(1-\frac{1}{W_\theta(u)}\right)^{i-1}\frac{e^{-\theta u}}{W_\theta^2(u)}\dif u.
\end{equation}
\end{subequations}
In the exponential case, we get the simpler formula
\begin{equation*}
J^{i,a}=(r-\reto)\int_0^a\left(1-\frac{1}{\Weto(u)}\right)^{i-1}\frac{e^{-(r-\reto)u}}{\Weto^2(u)}\dif u.
\end{equation*}
\end{prop}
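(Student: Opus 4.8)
The plan is to derive Proposition \ref{wowy} directly from the closed-form expressions for $\Esp[M_t^{i,a}]$ given in Corollary \ref{moyenne spectre allelique 2}, by a dominated-convergence argument applied after multiplication by $e^{-rt}$. First I would observe that, for fixed $a$ and $i$, the indicator $\1{a=t}$ vanishes once $t>a$, so for $t>a$ the second term in each formula of Corollary \ref{moyenne spectre allelique 2} disappears and one is left with a single integral over $x\in[0,a]$. In the integrand, the only dependence on $t$ sits in the factor $W'(t-x)$ (Model I) or $W'(t)$ (Model II), so the limit reduces to understanding $e^{-rt}W'(t-x)$ and $e^{-rt}W'(t)$ as $t\to\infty$.

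The key analytic input is the asymptotics of the scale function: since $m>1$ we have $r>0$ and, from \eqref{defi_W}, $\psi(\lambda)\sim (1/r)\int_0^\infty W(x)e^{-\lambda x}\,\dif x$ has a simple zero at $r$ with $\psi'(r)=\pse>0$, which by a Tauberian/renewal argument gives $W(x)\sim e^{rx}/\pse$ and, more to the point here, $e^{-rx}W'(x)\to r/\pse$ as $x\to\infty$. (In the exponential case this is exact: $W'(x)=be^{rx}$ and $r/\pse=r/(1-d/b)=b$, using \eqref{m_et_psi_Expo}.) Granting this, for Model I we get $e^{-rt}W'(t-x)=e^{-rx}\bigl(e^{-r(t-x)}W'(t-x)\bigr)\to (r/\pse)e^{-rx}$ pointwise in $x\in[0,a]$, and for Model II $e^{-rt}W'(t)\to r/\pse$ with no $x$-dependence at all. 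Pulling the constant $r/\pse$ out and renaming the remaining $x$-integral $J^{i,a}$ yields exactly \eqref{oasis}, \eqref{JiaI} and \eqref{JiaII}; the exponential case follows by substituting $W'(x)=\beto e^{\reto x}$, $\Weto(x)$ and using $r-\reto=\theta$ in Model II and $r-\reto=bp$ in Model I together with $r/\pse = b$.

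To make the interchange of limit and integral rigorous I would invoke dominated convergence on the compact interval $[0,a]$: the factors $\bigl(1-1/W_p(x)\bigr)^{i-1}$, $W_p'(x)/W_p^2(x)$, $e^{-rx}$ (resp. their Model II analogues) are continuous and bounded on $[0,a]$, and $e^{-rt}W'(t-x)$ is bounded uniformly in $t$ large and $x\in[0,a]$ because $e^{-ry}W'(y)$ converges, hence is bounded, for $y$ bounded away from $0$ (and $t-x\to\infty$ uniformly on $[0,a]$). So a constant dominating function exists and the passage to the limit is justified. The one genuine obstacle is establishing the scale-function asymptotics $e^{-rx}W'(x)\to r/\pse$ in the general (non-exponential) splitting tree setting; this is where the real work lies, but it is a standard consequence of the structure of $\psi$ near its largest root $r$ — indeed it is implicitly already used in Proposition \ref{convergence_Amaury}, where the limit law $\mathcal E$ is exponential with parameter $\pse$ — so I would either cite \cite{Amaury_contour_splitting_trees} for it or reprove it quickly via a renewal-equation/Laplace-transform inversion argument for $W$ associated with $\psi$. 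Everything else is the elementary manipulation of the explicit integrals above.
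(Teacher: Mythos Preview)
Your proposal is correct and follows exactly the route the paper indicates: the paper simply says that Proposition~\ref{wowy} is deduced from Corollary~\ref{moyenne spectre allelique 2}, and your write-up supplies precisely the missing details (drop the boundary term for $t>a$, use $e^{-ry}W'(y)\to r/\pse$, and pass to the limit under the integral on the compact interval $[0,a]$). The one analytic input you flag --- the asymptotics of $W'$ --- is indeed the only non-trivial step and is standard for scale functions of supercritical splitting trees (and exact in the exponential case, as you note), so your plan to cite \cite{Amaury_contour_splitting_trees} for it is appropriate.
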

Notice that $\Esp[M_t^{i,a}]$ grows exponentially with parameter $r$, as does $Z$ on its survival event.

\subsection{Convergence results}\label{Convergence results}
In this section and in all following ones, we are interested in long-time behaviors in the two models we consider. Then, from now on, \textbf{we assume that the process $Z$ is supercritical}.

This paragraph deals with improvements of the convergence results \eqref{oasis} regarding the expected frequency spectrum. The following results yield the asymptotic behavior as $t\rightarrow +\infty$ of the frequency spectrum $(M_t^{i,a},i\geq1)$,
 conditional on the survival event.

 The main technique we use to prove them is CMJ-processes counted with random characteristics (see \cite{Jagers_BP_with_bio} and Appendix A in \cite{Taib1992}). It enables us to obtain several pathwise convergence results regarding some  processes embedded in the supercritical splitting tree.

 A characteristic is a random non-negative function on $[0,+\infty)$.
 To each particle $x$ in the population, is associated a characteristic $\chi_x$, which can be viewed as a score or a weight. It must satisfy  that $(\lambda_x,\zeta_x,\chi_x)_x$ is an i.i.d. sequence, where we recall that $\lambda_x$ is the life length of $x$ and $\zeta_x$ its birth process.
 Then, the process counted with the characteristic $\chi$ is defined as
 \begin{equation}\label{CMJwith_charac}
Z^\chi(t):=\sum_{x}\chi_x(t-\sigma_x)\1{\sigma_x\leq t}.
\end{equation}
 For instance, if $\chi(t)=\1{t\leq\lambda_x}$, $Z^\chi$ equals $Z$ and if $\chi(t)=\1{t\leq\lambda_x\wedge a}$, $Z^\chi(t)$ is the number of extant particles at time $t$ with ages less than $a$.
 Then, provided technical conditions about $\chi$ are satisfied, the convergences of
 $e^{-r t}Z^\chi(t)$ and of $Z^\chi(t)/Z(t)$ as $t\to+\infty$ hold a.s. on the survival event. In our case, when $\chi$ is appropriately chosen, we can use this result to obtain the following statements.

 \begin{prop}
  \label{nombre_mutants}

Let $M_t$ be the number of extant types at time $t$.
  Almost surely, on the survival event of $Z$,
\begin{gather}
\lim_{t\to+\infty}e^{-r t}M_t=
J \mathcal E\notag\\
\lim_{t\to+\infty}e^{-r t}M_t^{i,a}= J^{i,a} \mathcal E\label{silence}
%
\end{gather}
where in Model I,
\begin{subequations}\label{defi_J}
\renewcommand{\theequation}{\theparentequation-\Roman{equation}}
\begin{equation}\label{defi_JI}
J:=\frac{r p}{1-p}\intpos e^{-r u}\ln(W_p(u))\dif u,
\end{equation}
while in Model II,
\begin{equation}\label{defi_JII}
J:=\theta\intpos\frac{e^{-\theta x}}{W_\theta(x)}\dif x
\end{equation}
\end{subequations}
and where $\mathcal E$ is the r.v. defined by \eqref{conv_surcrit}. 

In the exponential case, we have
$$J=(r-\reto)\int_0^\infty\frac{e^{-(r-\reto)u}}{\Weto(u)}\dif u.$$
\end{prop}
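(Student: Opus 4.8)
The plan is to realize $M_t$, and more generally $M_t^{i,a}$, as a CMJ-process counted with a suitable random characteristic, and then invoke the general convergence theorem for such processes (Nerman's theorem, as stated in Appendix A of \cite{Taib1992}): if $\chi$ is a random characteristic satisfying the appropriate integrability conditions, then a.s. on the survival event $e^{-rt}Z^\chi(t)\to \mathcal E\cdot\big(r\intpos e^{-ru}\Esp[\chi(u)]\,\dif u\big)$ up to the normalization convention fixed by Proposition~\ref{convergence_Amaury}, and consequently $Z^\chi(t)/Z(t)$ converges to the same limit divided by $e^{-rt}Z(t)\to\mathcal E$. So the core of the proof is: (i) identify the right $\chi$; (ii) compute $\intpos e^{-ru}\Esp[\chi(u)]\,\dif u$ and check it equals $J/r$ (resp. $J^{i,a}/r$); (iii) verify the integrability hypotheses.

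For step (i), I would attach to each particle $x$ the characteristic $\chi_x(s) := $ (number of \emph{new types} born to $x$ during its life in the time interval $[0,s]$ after its birth, whose clonal family is still alive at time $s$). Summing $\chi_x(t-\sigma_x)\1{\sigma_x\le t}$ over all $x$ counts exactly each extant type once, at the particle where its founding mutation occurred --- this gives $M_t$ (modulo the progenitor's type, which contributes an a.s.-negligible $O(1)$ term after multiplication by $e^{-rt}$). For $M_t^{i,a}$ one restricts to mutations that occurred after time $t-a$ (i.e.\ at age-of-the-characteristic $\le a$) and whose clonal family has exactly $i$ members. In Model~I the mutation events along $x$'s life form a thinning (rate $bp$) of its birth process; in Model~II they form an independent Poisson process of rate $\theta$ along $x$'s life; in both cases, conditionally on such a mutation at age $u$ of $\chi$, the probability that its clonal descendance is still alive (resp.\ of size $i$) at further elapsed time $v$ is governed by $\Zeto$, whose one-dimensional marginals are given by Proposition~\ref{1d_marginal_Xi} applied to the clonal splitting tree.

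For step (ii), I would compute $\Esp[\chi(u)]$ by integrating over the mutation time. For $M_t$ in Model~II this gives $\Esp[\chi(u)]=\theta\int_0^u \p(\Zeto(u-s)\ge 1)\,\dif s = \theta\int_0^u (1-\p(\Zeto(v)=0))\,\dif v$, and using $\p(\Zeto(v)=0)=1-W_\theta'(v)/(bW_\theta(v))$ together with $W_\theta'=e^{-\theta v}W'$ the Laplace-type integral $r\intpos e^{-ru}\Esp[\chi(u)]\,\dif u$ collapses, after one integration by parts, to $\theta\intpos e^{-\theta x}/W_\theta(x)\,\dif x = J$; the analogous computation in Model~I uses $\p(Z_p(v)\ge 1)$ and the relation $\psi_p(\lambda)=p\lambda+(1-p)\psi(\lambda)$, yielding the $\ln(W_p(u))$ form --- here the logarithm arises precisely because $\p(Z_p(v)=0)$ integrated against the birth intensity produces $\int_0^u W_p'(v)/(bW_p(v))\,\dif v = b^{-1}\ln W_p(u)$. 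The refined statement \eqref{silence} is obtained the same way but with $\p(\Zeto(v)=i)$ in place of $\p(\Zeto(v)\ge 1)$, reproducing exactly the integrands of \eqref{JiaI}--\eqref{JiaII}, since those $J^{i,a}$ were already shown in Proposition~\ref{wowy} to be the $t\to\infty$ limits of $e^{-rt}\Esp[M_t^{i,a}]$ up to the factor $r/(b\psi'(r))$, and $\Esp[\mathcal E]=1/\psi'(r)$, $\p(\ext^c)=r/b$ make the constants match. The exponential-case formula is then just the specialization $W_\star(u)=(\beto e^{\reto u}-\deto)/\reto$ with $r-\reto=b-\beto+\deto-d$ plugged into the Model-II computation.

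The main obstacle I expect is step (iii): verifying that $\chi$ meets the technical hypotheses of Nerman's theorem (essentially, that $t\mapsto e^{-rt}\Esp[\sup_{s\le t}\chi(s)]$ is directly Riemann integrable, or the $L\log L$-type condition on the reproduction point process, plus measurability/separability of $\chi$ in $t$). Since $\chi$ is nondecreasing in $s$ up to $s=\lambda_x$ and then possibly drops, and is bounded by the total number of births of $x$ (a Poisson variable of parameter $b\lambda_x$), one has $\Esp[\sup_s\chi(s)]\le b\,\Esp[\lambda_x]<\infty$ in the finite-mean case, and the required decay is inherited from the supercriticality ($r>0$) exactly as in the proof that $e^{-rt}Z(t)$ converges; I would cite the verification already carried out in \cite{Champagnat2010, Champ_Lamb_2} for Model~II and \cite{TheseMathieu} for Model~I rather than redo it. A secondary, purely bookkeeping point is to argue that dropping the progenitor's own type changes $M_t$ by at most the indicator that the founder lineage is still alive, which after multiplication by $e^{-rt}$ tends to $0$ a.s.; and similarly that on $\ext$ all the stated limits hold trivially since $M_t=0$ eventually.
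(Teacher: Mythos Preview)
Your proposal is correct and follows essentially the same route as the paper: the authors explicitly state (just before Proposition~\ref{nombre_mutants}) that the result is obtained by writing $M_t$ and $M_t^{i,a}$ as CMJ-processes counted with appropriately chosen random characteristics and invoking the Nerman-type a.s.\ convergence theorem, deferring the verification of the technical hypotheses to \cite{TheseMathieu} and \cite{Champagnat2010,Champ_Lamb_2}. Your sketch is in fact more detailed than what the paper gives; one minor omission in your step~(ii) is the factor $\p(\lambda_x>s)$ (the parent must be alive for the mutation to occur), but this extra factor is exactly what, after Fubini and the identity $\int_0^\infty e^{-rs}\p(\lambda_x>s)\,\dif s=1/b$, produces the correct normalization so that the limit equals $J\mathcal E$ rather than a multiple of it.
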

Notice that \eqref{silence} is consistent with \eqref{oasis} since $\p(\ext^c)=r/b$ and $\Esp[\mathcal E]=1/\pse$.
Moreover, \eqref{silence} still holds after $M_t^{i,a}$ is replaced by $M_t^{i,t}$ and $J^{i,a}$ by $J^{i,\infty}$.\\

 \subsection{Asymptotic behavior of the limiting frequency spectrum}\label{Behavior_limiting_frequency spectrum}
  Thanks to Proposition \ref{nombre_mutants}, the proportion $M_t^{i,a}/M_t$ of types carried by $i$ particles and with ages less than $a$ converges a.s. to $J^{i,a}/J$ as $t\to+\infty$. This limit is called ``the limiting frequency spectrum'' by Pakes in \cite{Pakes1989}. This paragraph is devoted to the asymptotic behavior, as $i\to+\infty$, of $J^i:=J^{i,\infty}$, obtained by taking $a=\infty$ in \eqref{JiaI} and \eqref{JiaII}. In the exponential case,
  \begin{equation}\label{Jiaetoile}
   J^{i}=(r-\reto)\int_0^\infty\left(1-\frac{1}{\Weto(u)}\right)^{i-1}\frac{e^{-(r-\reto)u}}{\Weto^2(u)}\dif u.
  \end{equation}

\subsubsection{Supercritical case}
In this paragraph, we only treat the exponential case.
Let us assume that the clonal process is supercritical, that is, $r_\star>0$.
 Define $$\nu:=\frac{r}{\reto},\quad \mu:=\frac{\beto}{\reto},\quad \gamma:=\frac{r-\reto}{\beto}.$$
 We have $\gamma=p/(1-p)$ in Model I and $\gamma=\theta/b$ in Model II. Recall that $J^i$ is the proportion of types carried by $i$ particles in the large time asymptotic. 

 \begin{prop}\label{equivalent_supercritic}
 In the exponential case, we have for both models
 $$J^i\underset{i\to+\infty}\sim i^{-1-\nu}\gamma\Gamma(\nu+1)\mu^\nu.$$
\end{prop}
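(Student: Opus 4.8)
The plan is to perform Watson-type asymptotics on the integral representation \eqref{Jiaetoile}. First I would recall the exponential formulas: in the exponential case $\Weto(u) = (\beto e^{\reto u} - \deto)/\reto$ and $\Weto'(u) = \beto e^{\reto u}$, with $r - \reto = \beto\gamma$. So I would write
\begin{equation*}
J^i = \beto\gamma \intpos \left(1 - \frac{1}{\Weto(u)}\right)^{i-1}\frac{e^{-\beto\gamma u}}{\Weto(u)^2}\,\dif u.
\end{equation*}
The integrand is concentrated near $u = +\infty$ when $i$ is large, since $1 - 1/\Weto(u) \to 1$ as $u\to\infty$ and is bounded away from $1$ on compacts. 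The natural move is the substitution $v = 1/\Weto(u)$, which maps $u\in(0,\infty)$ to $v\in(0, v_0)$ for some $v_0 \le 1$ (here $v_0 = 1$ in the supercritical-clonal case since $\Weto(0)=1$; one must check the endpoint). Under this change of variables $\dif v = -\Weto'(u)/\Weto(u)^2\,\dif u = -\beto e^{\reto u}v^2\,\dif u$, and $e^{\reto u} = (1 + \deto v)/(\beto v)$ from inverting $\Weto$, so $\dif u = -\dif v/(\reto v(1 + \deto v))$ — a rational expression in $v$. One also needs $e^{-\beto\gamma u} = e^{-(r-\reto)u}$; since $e^{\reto u} = (1+\deto v)/(\beto v)$ and $\nu = r/\reto$, $\mu = \beto/\reto$, this becomes a power of $v$ times a smooth factor: roughly $e^{-(r-\reto)u} = (e^{\reto u})^{-(r-\reto)/\reto} = (e^{\reto u})^{1-\nu} \asymp (\beto v)^{\nu - 1}$ near $v=0$. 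After simplification the integral takes the form
\begin{equation*}
J^i = \gamma \int_0^{v_0} (1 - v)^{i-1} v^{\nu - 1} g(v)\,\dif v,
\end{equation*}
where $g$ is an explicit smooth function on $[0, v_0]$ with $g(0) = \mu^\nu$ (this normalization is the computation I would not grind through, but it is forced by matching the powers of $\beto$, $\reto$, $\deto$).

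The second step is a standard Laplace/Watson lemma at the endpoint $v = 0$: since $(1-v)^{i-1}$ decays like $e^{-iv}$ and the mass concentrates in a window of width $1/i$, only the behavior of $v^{\nu-1}g(v)$ near $0$ matters, and
\begin{equation*}
\int_0^{v_0}(1-v)^{i-1}v^{\nu-1}g(v)\,\dif v \underset{i\to\infty}{\sim} g(0)\int_0^\infty e^{-iv}v^{\nu-1}\,\dif v = g(0)\,\Gamma(\nu)\,i^{-\nu}.
\end{equation*}
Combining, $J^i \sim \gamma\,\mu^\nu\,\Gamma(\nu)\,i^{-\nu}$. To land on the stated $i^{-1-\nu}\gamma\Gamma(\nu+1)\mu^\nu$ I would instead keep the Beta-integral exact: $\int_0^1 (1-v)^{i-1}v^{\nu}\,\dif v = B(\nu+1, i) = \Gamma(\nu+1)\Gamma(i)/\Gamma(i+\nu+1) \sim \Gamma(\nu+1)\,i^{-1-\nu}$ by Stirling, which matches the claim once one notices the effective power of $v$ in the integrand is $\nu$ rather than $\nu - 1$ (a careful bookkeeping of the exponents from $e^{-(r-\reto)u}$ and the Jacobian is needed — this is exactly the place where the $-1$ in the exponent $-1-\nu$ comes from, and I would double-check it against the subcritical/critical formulas stated earlier as a sanity check).

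The main obstacle, and the only genuinely delicate point, is making the Watson-lemma step rigorous uniformly in $i$: one must bound the contribution of $v$ bounded away from $0$ by something like $(1-\delta)^{i}$, control $g$ and its derivative near $0$ (in particular check $g$ extends continuously to $v=0$ with $g(0)\ne 0$, and that the endpoint $v_0$ contributes negligibly — here the supercriticality $\reto > 0$ is what guarantees $\Weto$ is increasing and unbounded so that $v$ really does range down to $0$), and then split $\int_0^{v_0} = \int_0^{\delta} + \int_\delta^{v_0}$ with $\delta = \delta(i)\to 0$ slowly. Everything else is bookkeeping of the explicit rational functions of $v$; the identification of the constant $\mu^\nu$ is a finite computation that I would carry out by substituting the exponential expressions for $\beto, \deto, \reto$ and simplifying.
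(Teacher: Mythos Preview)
Your approach is correct but takes a genuinely different route from the paper's. The paper first observes that $(J^i)_i$ is non-increasing and invokes a Tauberian theorem for series, reducing the problem to the tail sums $\sum_{i\geq j} J^i = (r-\reto)\int_0^\infty e^{-rt}\p(\Zeto(t)\geq j)\,\dif t$; it then sets $j=e^{\reto s}$, shifts the time variable, and uses the almost sure convergence $e^{-\reto t}\Zeto(t)\to E_\star$ from Proposition~\ref{convergence_Amaury} together with dominated convergence to evaluate the limit. Your substitution $v=1/\Weto(u)$ instead converts the integral directly into $\gamma\int_0^1(1-v)^{i-1}v^{\nu}g(v)\,\dif v$ with $g(v)=\bigl(\beto/(\reto+\deto v)\bigr)^{\nu}$ smooth on $[0,1]$ and $g(0)=\mu^\nu$ (your bookkeeping has minor slips---the inversion gives $e^{\reto u}=(\reto+\deto v)/(\beto v)$ and $\dif u=-\dif v/(v(\reto+\deto v))$---but after correction the power of $v$ is indeed $\nu$, not $\nu-1$, and the Watson step is entirely routine since $g$ is smooth on the whole interval). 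Your argument is more elementary and self-contained in the exponential case, avoiding both the Tauberian theorem and the probabilistic limit. The paper's argument, by contrast, uses only the Malthusian asymptotics of $\Zeto$, which is why the authors note it extends to arbitrary life-length distributions; your change of variables leans on the explicit form of $\Weto$, so extending it beyond the exponential case would require separate control of the asymptotic shape of $\Weto$ near infinity.
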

Notice that this result is consistent with \cite{Maruvka2011} where Maruvka et al. use an
approximation of the frequency spectrum by a concentration driven by a PDE, and with
\cite{Pakes1989} where Pakes considers Markov branching processes with multiple simultaneous
births, binomial mutations at birth and no Poissonian mutations.

\begin{rem}
The following proof of Proposition \ref{equivalent_supercritic} easily extends to any life length distributions since it is based on Proposition \ref{convergence_Amaury} which holds in the general case.
 \end{rem}

\begin{proof}[Proof of Proposition \ref{equivalent_supercritic}]
 Since $\Weto(t)\geq1$ for $t\geq0$, the sequence $\left(J^i\right)_{i\geq1}$ is positive and non-increasing. Then, according to a Tauberian theorem about series, to prove Proposition \ref{equivalent_supercritic}, it is sufficient to prove that $\sum_{i\geq j}J^i$ is equivalent to
 $i^{-\nu}\gamma\Gamma(\nu)\mu^\nu$ as $j\to+\infty$.

 Recalling \eqref{Jiaetoile}, we have $$\sum_{i\geq j}J^i=(r-\reto)\intpos e^{-r t}\p(\Zeto(t)\geq j)\dif t$$ and from now on, we follow the proof of \cite[Thm 3.2.1]{Pakes1989}.
Let $s>0$ be such that $j=e^{\reto s}$. Then $j^\nu=e^{r s}$ and
{\setlength\arraycolsep{2pt}
\begin{eqnarray*}
j^\nu\int_{0}^\infty e^{-r t}\p(\Zeto(t)\geq j)\dif t&=&\int_{-s}^\infty e^{-r t}\p(\Zeto(t+s)\geq e^{\reto s})\dif t\\
&=&\int_{-s}^\infty e^{-r t}\p(e^{-\reto(t+s)}\Zeto(t+s)\geq e^{-\reto t})\dif t.
\end{eqnarray*}}
Using Proposition \ref{convergence_Amaury} and \eqref{m_et_psi_Expo},  $\p(e^{-\reto(t+s)}\Zeto(t+s)\geq e^{-\reto t})\underset{s\to+\infty}\longrightarrow \p(E_\star\geq e^{-\reto t})$ where $E_\star$ is a non-negative r.v. such that $$\p(E_\star=0)=1-\frac{\reto}{\beto}$$
and conditional on $\{E_\star>0\}$, $E_\star$ is an exponential r.v. with parameter $\psi_\star'(\reto)=1-\frac{\deto}{\beto}=\frac{1}{\mu}$.
Moreover, using Markov inequality, for $\eps>0$,
$$\p(e^{-\reto(t+s)}\Zeto(t+s)\geq e^{-\reto t})\leq e^{\reto(\nu+\eps)t}\EE{\left(\Zeto(s+t)e^{-\reto(s+t)}\right)^{\nu+\eps}}\leq C e^{\reto(\nu+\eps)t}
$$
using again the a.s. convergence in Theorem \ref{convergence_Amaury}. Then, for $s>0$ and $t\in\R$, we have
$$e^{-r t}\p(e^{-\reto(t+s)}\Zeto(t+s)\geq e^{-\reto t})\leq e^{-r t}\1{t>0}+C e^{\reto\eps t}\1{t<0}$$
and thanks to the dominated convergence theorem,
$$j^\nu\int_{0}^\infty e^{-r t}\p(\Zeto(t)\geq j)\dif t\underset{j\to+\infty}\longrightarrow \int_{\R} e^{-r t}\p(E_\star\geq e^{-\reto t})\dif t=\frac{1}{\mu}\int_{\R} e^{-r t}e^{-\frac{1}{\mu}e^{-\reto t}}\dif t$$
 The change of variables $x=e^{-\reto t}$ in the last integral leads to
 $$j^\nu\int_{0}^\infty e^{-r t}\p(\Zeto(t)\geq j)\dif t\underset{j\to+\infty}\longrightarrow\frac{1}{\beto}\int_0^{\infty}x^{\nu-1}e^{-x/\mu}\dif x=\frac{\Gamma(\nu)}{\beto}\mu^\nu,$$
 which terminates the proof.
 \end{proof}
\subsubsection{Critical case}
We want to obtain a similar result to Proposition \ref{equivalent_supercritic} when the clonal population is critical.
It seems that this is not possible in a general setting due to the non explicit expression of the functions $W_p$ and $W_\theta$. However, in the exponential and critical case, we have the simpler expression $\Weto(t)=1+\beto t$ and $\reto=0$.
  Then, we have
 $$J^i=r\intpos e^{-rt}\left(1-\frac{1}{1+\beto t}\right)^{i-1}\frac{1}{(1+\beto t)^2}\dif t.$$
 \begin{prop}\label{equivalent_critic}
In the exponential case, we have
 \begin{equation*}\label{comportem_Ji}
 J^i_\star\underset{i\to+\infty}\sim C\left(\gamma\right) i^{-3/4}e^{-2\sqrt{\gamma i}}
 \end{equation*}
where we recall that here $\gamma=r/\beto=r/d_\star$ and we have set $C(x)=\sqrt{\pi}e^{x/2}x^{5/4}$ for $x>0$.
 \end{prop}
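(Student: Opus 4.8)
The plan is to turn $J^{i}$ into a single Laplace-type integral and then apply Laplace's method around a saddle point that drifts to the boundary as $i\to+\infty$. Starting from the expression given just before the statement,
\[
J^{i}=r\intpos e^{-rt}\left(1-\frac{1}{1+\beto t}\right)^{i-1}\frac{\dif t}{(1+\beto t)^2},
\]
I would first use the change of variables $v=\beto t/(1+\beto t)\in(0,1)$. Then $1+\beto t=(1-v)^{-1}$, $(1+\beto t)^{-2}\dif t=\beto^{-1}\dif v$, $1-\frac{1}{1+\beto t}=v$, and $rt=\gamma\bigl(\frac{1}{1-v}-1\bigr)$ with $\gamma=r/\beto$, so $e^{-rt}=e^{\gamma}e^{-\gamma/(1-v)}$. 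This gives
\[
J^{i}=\gamma e^{\gamma}\int_0^1 v^{i-1}e^{-\gamma/(1-v)}\,\dif v=\gamma e^{\gamma}\int_0^1(1-\tau)^{i-1}e^{-\gamma/\tau}\,\dif\tau,
\]
the last equality after $\tau=1-v$. It then suffices to find the asymptotics of $I_{i}:=\int_0^1(1-\tau)^{i-1}e^{-\gamma/\tau}\dif\tau$.

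Next I would locate the saddle. The exponent $\tau\mapsto(i-1)\ln(1-\tau)-\gamma/\tau$ of the integrand is maximal near $\tau_{i}:=\sqrt{\gamma/(i-1)}\to0$, and the relevant large parameter is $\lambda_{i}:=\sqrt{\gamma(i-1)}=(i-1)\tau_{i}=\gamma/\tau_{i}$. Cutting at a fixed $\eps\in(0,1)$, the part $\int_\eps^1$ is bounded by $(1-\eps)^{i-1}$, which is negligible compared with $e^{-2\lambda_{i}}$; only $\int_0^\eps$ matters. On $(0,\eps)$ I substitute $\tau=\tau_{i}w$; using $\ln(1-x)=-x-x^2/2+O(x^3)$, together with $(i-1)\tau_{i}^2=\gamma$ and $(i-1)\tau_{i}^3=\gamma^{3/2}(i-1)^{-1/2}\to0$, one gets
\[
(i-1)\ln(1-\tau_{i}w)-\frac{\gamma}{\tau_{i}w}=-\lambda_{i}\,\phi(w)-\frac{\gamma}{2}w^{2}+R_{i}(w),\qquad \phi(w):=w+\frac1w,
\]
with $R_{i}(w)\to0$ uniformly on compact subsets of $(0,\infty)$. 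Since $\phi(w)-2=(\sqrt w-1/\sqrt w)^2$ vanishes only at $w=1$, where $\phi''(1)=2$, this pins the exponential rate $e^{-2\lambda_{i}}$ and a Gaussian peak of width $\lambda_{i}^{-1/2}$.

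Laplace's method (for the last integral: zoom $w=1+s\lambda_{i}^{-1/2}$ and pass to the limit by dominated convergence, the factor $e^{-\gamma w^2/2+R_{i}(w)}$ tending to the constant $e^{-\gamma/2}$ over the peak) then yields
\[
I_{i}\;\sim\;\tau_{i}\,e^{-2\lambda_{i}}\,e^{-\gamma/2}\sqrt{\frac{2\pi}{\lambda_{i}\,\phi''(1)}}\;=\;\sqrt{\pi}\,\gamma^{1/4}\,e^{-\gamma/2}\,(i-1)^{-3/4}\,e^{-2\sqrt{\gamma(i-1)}},
\]
using $\tau_{i}=\sqrt{\gamma}\,(i-1)^{-1/2}$ and $\lambda_{i}=\sqrt{\gamma}\,(i-1)^{1/2}$. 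Multiplying by $\gamma e^{\gamma}$ and using $(i-1)^{-3/4}\sim i^{-3/4}$ and $\sqrt{\gamma(i-1)}=\sqrt{\gamma i}+o(1)$ gives
\[
J^{i}\;\sim\;\sqrt{\pi}\,e^{\gamma/2}\,\gamma^{5/4}\,i^{-3/4}\,e^{-2\sqrt{\gamma i}}\;=\;C(\gamma)\,i^{-3/4}\,e^{-2\sqrt{\gamma i}},
\]
which is the announced equivalent.

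The main obstacle is making the Laplace estimate rigorous, because here the large parameter enters through the $\ln(1-\tau)$ term and not merely as a prefactor: one has to control the Taylor remainder $R_{i}(w)$ uniformly on a neighbourhood of the moving saddle $\tau_{i}\to0$, confirm that the slowly varying factor $e^{-\gamma w^{2}/2}$ is essentially constant over the $O\bigl((i-1)^{-1/4}\bigr)$-wide peak, and dispose of the two tails (small $w$, where $e^{-\lambda_{i}/w}$ is tiny, and large $w$, already absorbed in the piece $\int_\eps^1$). All of this is handled by the double rescaling $\tau=\tau_{i}w$ then $w=1+s\lambda_{i}^{-1/2}$, reducing $\int e^{-\lambda_{i}\phi(w)}(\cdots)\dif w$ to a genuine Gaussian integral, together with a dominated-convergence argument using the coercivity bound $\phi(w)-2=(\sqrt w-1/\sqrt w)^2$ to produce an $i$-uniform integrable majorant.
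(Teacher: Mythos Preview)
Your argument is correct and complete; the change of variables, the identification of the moving saddle $\tau_i\sim\sqrt{\gamma/(i-1)}$, the rescaling $\tau=\tau_i w$ reducing the exponent to $-\lambda_i(w+1/w)-\gamma w^2/2+o(1)$, and the ensuing Gaussian evaluation all check out and give exactly $C(\gamma)i^{-3/4}e^{-2\sqrt{\gamma i}}$.

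The paper's proof takes a different and much shorter route: after the linear change of variable $s=\beto t$ it recognizes
\[
J^i=\gamma\int_0^\infty e^{-\gamma s}\,\frac{s^{i-1}}{(1+s)^{i+1}}\,\dif s=\gamma\,\Gamma(i)\,U(i,0,\gamma),
\]
where $U$ is the Tricomi confluent hypergeometric function (Abramowitz--Stegun, Ch.~13, integral representation), and then simply quotes an asymptotic formula for $\Gamma(i)U(i,0,\gamma)$ as $i\to\infty$ from Pakes~\cite[Thm~3.3.2]{Pakes1989}. Your approach is a self-contained Laplace/steepest-descent derivation of precisely that special-function asymptotic, making transparent the origin of the exponent $-2\sqrt{\gamma i}$ (the minimum of $\phi(w)=w+1/w$) and of the power $i^{-3/4}$ (the Jacobian $\tau_i\sim i^{-1/2}$ times the Gaussian width $\lambda_i^{-1/2}\sim i^{-1/4}$). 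The paper's route is two lines long but outsources the analysis to references; yours is longer but stands on its own and would transfer to variants where no tabulated special function is available.
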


 \begin{proof}
  By a change of variables, we have set
  $$J^i=\frac{r}{\beto}\intpos e^{rt/\beto}\frac{t^{i-1}}{(1+t)^{i+1}}\dif t=\gamma\Gamma(i)U(i,0,\gamma)$$ where $U$ is known as a confluent hypergeometric function (see \cite[Ch.13]{MR0167642}).
  Then, using \cite[Thm 3.3.2]{Pakes1989} with $B=-2$, we have the result.
 \end{proof}


  \section{Asymptotic results about large and old families}\label{Grandes_familles}
 We now state results about ages of the oldest families and about sizes of the largest ones. We mainly focus on the case  when clonal populations are subcritical. Then, in Subsection \ref{Other_results}, we explain which results hold in the critical and supercritical cases.

 We need some notation.
 For $t\geq0$,
 \begin{itemize}
  \item for $a\geq0$, let $O_t(a)$ be the number of extant families at time $t$, with ages greater than $a$ ($O$ for ``old''); for convenience, we set $O_t(a)=0$ if $a<0$.
  \item for $x\in\R$, let $L_t(x)$ be the number of families with sizes greater than $x$ at time $t$ ($L$ for ``large'').
 \end{itemize}
In this section, we are interested in finding the orders of magnitudes of the ages and of the sizes of the families, that is, in finding numbers $c_t$ and $x_t$ such that $\EE{O_t(c_t)}$ and $\EE{L_t(x_t)}$ converge to positive and finite real numbers as $t\to+\infty$.

 \subsection{Ages of old families in the subcritical case}\label{typical ages}
In this section, we suppose that the clonal processes are subcritical and we are interested in ages of old families.
Although we only state the results in the exponential case, they also hold in the general case and are proved in \cite[Ch. 3]{TheseMathieu} and \cite{Champ_Lamb_2}. However, to obtain the general results in Model I, additional assumptions about the lifespan measure $\Lambda$ are required, which are easily satisfied in the exponential case (for instance, we need the existence of a negative root of $\psi_p$, which, with easy computations, is $b(1-p)-d$ in the exponential case).

In the first result, which is a result in expectation, we show that in both models, the ages are of order of magnitude $$c_t:=\frac{r}{r-\reto}t.$$

 \begin{prop}[\cite{TheseMathieu,Champ_Lamb_2}]\label{expect subcritical}
  We suppose that $\Zeto$ is subcritical. For $a\in\R$, we have
$$\Esp[O_t(a+c_t)]\underset{t\rightarrow\infty}\longrightarrow\frac{|\reto|}{\deto}
e^{-(r-\reto) a}.$$
\end{prop}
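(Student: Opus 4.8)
The starting point is the expression for $\Esp[M_t^{i,\dif a}]$ in the exponential case, equation \eqref{expect_spectrum_expo}. Summing over $i\geq1$ (a geometric series) gives the expected number of families with age in $[a-\dif a,a]$, namely
$$
(r-\reto)e^{rt}\,\frac{e^{-(r-\reto)a}}{\Weto(a)}\,\dif a,
$$
where the sum $\sum_{i\ge 1}(1-1/\Weto(a))^{i-1}\Weto(a)^{-2}=\Weto(a)^{-1}$ was used. Integrating this density over all ages $x$ from $a+c_t$ to $t$ (the maximal possible age at time $t$, noting the progenitor's family does not contribute here since $\Zeto$ subcritical means $r_\star<0$ so its family is a.s. extinct eventually, and for the finite-$t$ estimate its contribution is negligible) yields
$$
\Esp[O_t(a+c_t)]=(r-\reto)e^{rt}\int_{a+c_t}^{t}\frac{e^{-(r-\reto)x}}{\Weto(x)}\,\dif x
=(r-\reto)\int_{a+c_t}^{t}\frac{e^{r t-(r-\reto)x}}{\Weto(x)}\,\dif x .
$$

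Next I would perform the change of variables $x=c_t+u=\frac{r}{r-\reto}t+u$, so that $x$ ranges over $u\in[a,\ t-c_t]=[a,\ \frac{-\reto}{r-\reto}t]$ (this upper limit tends to $+\infty$ since $\reto<0$). The key algebraic simplification is that the exponent becomes
$$
rt-(r-\reto)x = rt-(r-\reto)\Big(\tfrac{r}{r-\reto}t+u\Big)= -(r-\reto)u,
$$
so the $t$-dependence in the exponential cancels exactly — this is precisely why $c_t$ is chosen with that constant. The integral becomes $(r-\reto)\int_a^{(-\reto/(r-\reto))t} e^{-(r-\reto)u}\,\Weto(c_t+u)^{-1}\,\dif u$. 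Recalling that in the exponential subcritical clonal case $\Weto(y)=(\beto e^{\reto y}-\deto)/\reto$ with $\reto<0$, we have $\Weto(c_t+u)\to \deto/|\reto|$ as $t\to\infty$ (since $e^{\reto(c_t+u)}\to 0$). So pointwise in $u$ the integrand converges to $(r-\reto)e^{-(r-\reto)u}|\reto|/\deto$, and $\int_a^\infty(r-\reto)e^{-(r-\reto)u}\,\dif u=e^{-(r-\reto)a}$, giving the claimed limit $\frac{|\reto|}{\deto}e^{-(r-\reto)a}$.

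The main obstacle is justifying the interchange of limit and integral, i.e. producing a dominating function on $[a,\infty)$ uniformly in $t$. Since $\Weto$ is increasing and $\Weto(c_t+u)\ge \Weto(u)\ge \Weto(a)\ge$ a positive constant for $u\ge a$, and on the other hand $\Weto(c_t+u)\ge \Weto(c_t+a)$ is bounded below, one gets $\Weto(c_t+u)^{-1}\le C$ uniformly, so the integrand is dominated by $C(r-\reto)e^{-(r-\reto)u}$, which is integrable on $[a,\infty)$; dominated convergence then applies. A minor point to handle carefully is the moving upper limit of integration $(-\reto/(r-\reto))t$, which is absorbed by extending the integral to $+\infty$ and noting the tail is $O(e^{-(r-\reto)\cdot(-\reto/(r-\reto))t})\to 0$. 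I would also note (as the text does) that the same argument goes through for general lifespan measures using the asymptotics $\Weto(y)\to 1/\psi_\star'(0+)$ as $y\to\infty$ in the subcritical case, provided the negative root of $\psi_\star$ exists, which replaces the explicit exponential formula for $\Weto$.
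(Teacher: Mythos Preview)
Your argument is correct and follows exactly the route the paper indicates: sum the expected spectrum density \eqref{expect_spectrum_expo} over $i\ge 1$ to get $(r-\reto)e^{rt}e^{-(r-\reto)x}/\Weto(x)\,\dif x$, integrate over $x\in(a+c_t,t)$, change variables $x=c_t+u$ so that the $t$-dependence in the exponential cancels, and pass to the limit by dominated convergence using $\Weto(c_t+u)\to\deto/|\reto|$ and the uniform bound $\Weto\ge 1$. Your treatment of the progenitor's family (its survival probability is $\Weto'(t)/(\beto\Weto(t))\to 0$ in the subcritical case) and of the moving upper limit is also fine, and your remark on the general case via $\Weto(y)\to 1/\psi_\star'(0+)$ matches what the paper alludes to.
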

 This result is a consequence of the expected spectrum formula \eqref{expect_spectrum_expo}, summed over $i\geq1$ and integrated on $(a+c_t,t)$. We also obtain a more precise result about the convergence in distribution of $O_t(a+c_t)$ as $t\to+\infty$.
\begin{prop}[\cite{TheseMathieu,Champ_Lamb_2}]\label{critical_ages}
With the same assumptions as in Proposition \ref{expect subcritical}, for $a\in\R$,
conditional on the survival event, as $t\to\infty$,
$O_t(a+c_t)$ converges in distribution to a r.v. $\mathcal O$, distributed as a mixed Poisson r.v. whose parameter of mixture is
$$\frac{b}{r}\frac{|\reto|}{\deto}\ e^{-(r-\reto)a} E.$$
where $E$ is an exponential r.v. with mean 1. Equivalently, $\mathcal O$ is geometric on $\{0,1,\cdots\}$ with success probability 
$$\frac{1}{1+\frac{b}{r}\frac{|\reto|}{\deto}\ e^{-(r-\reto)a}}.$$
\end{prop}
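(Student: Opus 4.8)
The plan is to represent $O_t(a+c_t)$, conditionally on the splitting tree observed up to a suitable growing time, as a sum of conditionally independent rare indicators, to Poissonise it, and to recover the random limiting parameter from the martingale convergence $e^{-rt}Z(t)\to\mathcal E$ together with the first-moment asymptotics of Proposition~\ref{expect subcritical}.

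Set $s_t:=t-a-c_t$. Since $c_t=\frac{r}{r-\reto}t$ with $\reto<0$, one has $(r-\reto)c_t=rt$, hence $s_t=\frac{|\reto|}{r-\reto}t-a\to+\infty$ and $a+c_t\to+\infty$. A family is counted by $O_t(a+c_t)$ precisely when its founding mutation occurred before $s_t$ and its clonal subtree is still alive at $t$; families extinct at $s_t$, and the ancestral type (which is alive at $t$ with probability $\p(\Zeto(t)>0)\to0$), contribute a quantity that vanishes in the limit, so one may restrict to families alive at time $s_t$. Conditioning on $\mathcal F_{s_t}$, the $\sigma$-field of the splitting tree with its mutations up to $s_t$, the branching property and the memorylessness of lifespans show that the clonal descendances of the $Z(s_t)$ extant particles form $Z(s_t)$ independent copies of $\Zeto$ started from one particle; hence a family carrying $n_j$ particles at $s_t$ is alive at $t$ with conditional probability $p_j:=1-(1-q(a+c_t))^{n_j}$, where $q(v):=\p(\Zeto(v)>0)$, and survival of distinct families is conditionally independent. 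So, conditionally on $\mathcal F_{s_t}$ and up to the negligible term, $O_t(a+c_t)=\sum_j \1{B_j}$ where $B_j$ are independent with conditional probabilities $p_j$, and $\sum_j n_j=Z(s_t)$ because the families partition the population at $s_t$.

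Then I would Poissonise. In the subcritical clonal regime $q(v)\sim\frac{|\reto|}{\deto}e^{\reto v}\to0$ and $\Esp[\Zeto(v)]=e^{\reto v}\le1$; using in addition that the number of families alive at $s_t$ is of order $e^{rs_t}$ (Proposition~\ref{nombre_mutants}) and that the largest of them has size $O(t)$ (Yaglom-type exponential tails for $\Zeto$ conditioned to survive), one checks that $\max_j p_j\to0$ and $\sum_j p_j^2\to0$ in probability. The Stein--Chen bound then yields, for the conditional law, $d_{\mathrm{TV}}\!\left(O_t(a+c_t),\ \mathrm{Poisson}(\Lambda_t)\right)\to0$ with $\Lambda_t:=\sum_j p_j$; expanding $p_j=n_j q(a+c_t)+O\!\left(n_j^2 q(a+c_t)^2\right)$ and using the same bounds gives $\Lambda_t=q(a+c_t)\,Z(s_t)$ up to a term that tends to $0$ in probability. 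Since $(r-\reto)c_t=rt$, a short computation gives $\reto(a+c_t)+rs_t=-(r-\reto)a$, so $q(a+c_t)e^{rs_t}\to\frac{|\reto|}{\deto}e^{-(r-\reto)a}$; combined with $e^{-rs_t}Z(s_t)\to\mathcal E$ (Proposition~\ref{convergence_Amaury}) this gives $\Lambda_t\to\frac{|\reto|}{\deto}e^{-(r-\reto)a}\,\mathcal E$ in probability. Hence, conditionally on $\ext^c$, where $\mathcal E$ is exponential with parameter $\pse=r/b$, that is $\mathcal E\overset{d}{=}\tfrac br E$ with $E$ a standard exponential, $O_t(a+c_t)$ converges in distribution to a mixed Poisson variable $\mathcal O$ with mixing parameter $\tfrac br\frac{|\reto|}{\deto}e^{-(r-\reto)a}E$. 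Writing $\lambda:=\tfrac br\frac{|\reto|}{\deto}e^{-(r-\reto)a}$, such a mixed Poisson has generating function $z\mapsto(1+\lambda(1-z))^{-1}$, which is exactly that of a geometric law on $\{0,1,\dots\}$ with success probability $(1+\lambda)^{-1}$, as claimed. As a sanity check, $\p(\ext^c)\,\Esp[\mathcal O\mid\ext^c]=\tfrac rb\cdot\lambda=\frac{|\reto|}{\deto}e^{-(r-\reto)a}$, consistently with Proposition~\ref{expect subcritical}.

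I expect the main obstacle to be the uniform control, over $t$, of the number of families alive at $s_t$ and of their sizes $n_j$, which is what makes the Poisson approximation and the linearisation of $\Lambda_t$ rigorous: this requires a priori estimates on the largest clonal families and on the total number of families at time $s_t$, obtained via coalescent point processes, the expected-spectrum formula, or Yaglom limits for $\Zeto$. This is also the point at which, in the general (non-exponential) version of Model~I, the additional hypotheses on the lifespan measure $\Lambda$ become necessary.
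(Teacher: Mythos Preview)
Your argument is essentially correct in the exponential case, but it is a genuinely different route from the one the paper indicates. For Model~I the paper follows Ta\"ib and appeals to the Jagers--Nerman theory of CMJ processes counted with \emph{time-dependent} random characteristics: one writes $O_t(a+c_t)=Z^{\chi^t}(t)$ for a suitable family $(\chi^t)_{t\ge0}$ and checks the technical conditions of that abstract limit theorem. For Model~II the paper instead exploits the coalescent point process structure of splitting trees and avoids random characteristics altogether. Your approach bypasses both pieces of machinery: you condition on the tree with its allelic partition at time $s_t=t-a-c_t$, write $O_t(a+c_t)$ as a sum of conditionally independent rare indicators (survival of each extant family over the remaining time $a+c_t$), apply Stein--Chen, and identify the random Poisson parameter via $e^{-rs_t}Z(s_t)\to\mathcal E$ and the subcritical survival asymptotics $q(v)\sim\frac{|\reto|}{\deto}e^{\reto v}$.

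The trade-offs are as follows. Your method is more elementary and transparent, and the limiting parameter falls out of a two-line computation once $\Lambda_t\approx q(a+c_t)Z(s_t)$ is established; on the other hand it leans on Markovian memorylessness at the conditioning time $s_t$, so outside the exponential case you would have to track residual lifetimes, which is precisely where the paper's heavier tools (random characteristics, or the contour/coalescent description) earn their keep. The random-characteristics route handles general lifespans uniformly, at the price of verifying the Jagers--Nerman hypotheses; the coalescent route is specific to splitting trees but yields explicit constants. Your acknowledged obstacle---uniform control of the number and sizes of families at $s_t$ to justify $\max_j p_j\to0$, $\sum_j p_j^2\to0$, and the linearisation $\sum_j p_j = q(a+c_t)Z(s_t)+o_{\p}(1)$---is real but manageable in the exponential case: the Yaglom limit for $\Zeto$ makes family sizes at $s_t$ asymptotically geometric, so the maximum over $\asymp e^{rs_t}$ families is $O(t)$, and the exponential decay of $q(a+c_t)$ then kills all the error terms.
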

 The proof of this proposition in the general case and for Model I, given in \cite{TheseMathieu}, follows arguments of Ta\"ib in \cite{Taib1992} and
  uses the notion of CMJ processes counted with \emph{time-dependent} random characteristics developed by Jagers and Nerman in \cite{Jagers1984a,Jagers1984}. The difference with \eqref{CMJwith_charac} is that here the characteristics are allowed to depend on time.
 This theory provides convergences in distribution, as $t\to+\infty$, of quantities of the form
 $$Z^{\chi^t}_t:=\sum_{x}\chi^t_{x}(t-\sigma_x)\1{\sigma_x\leq t},$$
 under technical conditions about the family of characteristics $(\chi^t(\cdot),t\geq0)$. The
 proof of Proposition \ref{critical_ages} for Model II is given in~\cite{Champ_Lamb_2} and
 does not make use of random characteristics.\\

 The last result deals with the convergence in distribution of the sequence of ranked ages of extant families.
 Let $\mathcal M(\R)$ be the set of non-negative $\sigma$-finite measures on $\R$ and finite on $\R^+$,
equipped with the \emph{left-vague topology} induced by the maps $\nu\mapsto\int_{\R} f(x)\nu(\dif x)$
for all bounded continuous functions $f$ such that there exists $x_0\in\R$ satisfying $\forall x\leq x_0, f(x)=0$.

 \begin{thm}[\cite{TheseMathieu,Champ_Lamb_2}]\label{point process2}With the same assumptions as previously,
let $X_t$ be the point process defined by
$$X_t(\dif x):=\sum_{k\geq1}\delta_{A_t^k-c_t}(\dif x)$$ where $A_t^1\geq A_t^2\geq\cdots$ is the decreasing sequence of ages of alive families at $t$.
Then, conditional on the survival event, $X_t$ converges as $t\rightarrow\infty$ in $\mathcal M(\R)$ equipped with the left-vague topology to a mixed Poisson point process with intensity measure
\begin{equation*}\label{mesure_carac2}
\frac{b}{r}\frac{|\reto|}{\deto}E\ (r-\reto)e^{-(r-\reto)x}\dif x
\end{equation*}
where $E$ is an exponential r.v. with mean 1.
\end{thm}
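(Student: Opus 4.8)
The plan is to recognize $X_t$ as a Crump-Mode-Jagers process counted with a family of time-dependent random characteristics and to invoke the Jagers-Nerman limit theory, exactly as in the proof of Proposition \ref{critical_ages}, but now tracking the whole ranked sequence of ages rather than a single integer-valued statistic. Concretely, for a test function $f$ that is bounded, continuous, and vanishes on $(-\infty,x_0]$ for some $x_0\in\R$, I would express $\langle X_t,f\rangle=\sum_{k\ge1}f(A_t^k-c_t)$ as $\sum_x \chi^t_x(t-\sigma_x)\1{\sigma_x\le t}$, where the characteristic $\chi^t_x(s)$ attached to particle $x$ carries the value $f(\cdot-c_t)$ evaluated at the age of the \emph{new allelic family} founded by $x$ (if $x$ is a mutant, resp.\ if a mutation occurs along $x$'s life in Model II), provided that family is still alive at time $t$, and $0$ otherwise. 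The point is that each alive family at time $t$ is founded by exactly one mutation event, so summing the contribution $f(\text{age}-c_t)$ over founders recovers $\langle X_t,f\rangle$; the i.i.d.\ structure of $(\lambda_x,\zeta_x,\chi^t_x)_x$ needed by the theory is inherited from the branching property of the splitting tree and the independence of the mutation marks.

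The next step is to compute the mean measure and identify the limit. Using the expected frequency spectrum formula \eqref{expect_spectrum_expo}, the expected number of alive families at time $t$ with age in $[a,a+\dif a]$ is obtained by summing \eqref{expect_spectrum_expo} over $i\ge1$, which collapses the geometric sum and gives a density of the form $\mathrm{const}\cdot e^{rt}e^{-(r-\reto)a}\,\dif a/\Weto(a)$ up to the factors appearing in Proposition \ref{expect subcritical}. Shifting by $c_t=\frac{r}{r-\reto}t$ and letting $t\to\infty$, the $e^{rt}$ prefactor is exactly compensated (since $e^{rt}e^{-(r-\reto)c_t}=1$) and the $\Weto(a+c_t)$ in the denominator tends to $1$ because $\Weto$ is bounded in the subcritical clonal case; this yields the claimed intensity $\frac{|\reto|}{\deto}(r-\reto)e^{-(r-\reto)x}\,\dif x$ for the expectation, matching Proposition \ref{expect subcritical} upon integration. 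The Jagers-Nerman machinery then promotes this to a limit in distribution of $Z^{\chi^t}_t$, rescaled by $e^{rt}$, towards $\mathcal E$ times the deterministic integral of $f$ against that intensity — and conditionally on survival, $e^{-rt}Z(t)\to\mathcal E$ with $\mathcal E$ exponential, which upgrades the deterministic limit to the mixed (by $E=\mathcal E/\Esp[\mathcal E]$, suitably normalized) intensity $\frac{b}{r}\frac{|\reto|}{\deto}E\,(r-\reto)e^{-(r-\reto)x}\,\dif x$. Poissonization follows the standard argument: conditionally on $\mathcal E$, founders of distinct families arise from essentially independent contributions so that joint Laplace functionals factorize in the limit, giving a Poisson point process with the stated (random) intensity.

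The main obstacle is the verification of the technical hypotheses required by the time-dependent random characteristics theorem of Jagers-Nerman: one must check the relevant integrability/regularity conditions on the family $(\chi^t(\cdot))_{t\ge0}$ uniformly in $t$ — in particular that the characteristics are a.s.\ of locally bounded variation, that appropriate $L^1$ bounds hold after the $e^{-rt}$ rescaling, and that $\chi^t$ converges in the right sense as $t\to\infty$ — and this is precisely where the subcriticality of the clonal process and, in Model I, the extra assumption on $\Lambda$ (existence of a negative root of $\psi_p$) enter, to control the tail of the age of a family that survives to time $t$. A secondary subtlety is handling the progenitor's own family separately, as in Corollary \ref{moyenne spectre allelique 2}: its age is exactly $t$, hence its shifted value $t-c_t=\frac{-\reto}{r-\reto}t\to+\infty$ escapes to $+\infty$ under the left-vague topology, so it contributes nothing to the limit — which is consistent with the chosen topology being insensitive to mass escaping to $+\infty$. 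Once these points are settled, the convergence of Laplace functionals $\Esp[\exp(-\langle X_t,f\rangle)\mid\ext^c]$ to that of the mixed Poisson point process follows, and the left-vague topology is exactly the one that makes this convergence equivalent to convergence in $\mathcal M(\R)$.
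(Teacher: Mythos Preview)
Your overall strategy---expressing $\langle X_t,f\rangle$ as a CMJ process counted with time-dependent random characteristics and appealing to the Jagers--Nerman theory---is exactly what the paper indicates for Model~I (following Ta\"ib). For Model~II, however, the paper explicitly states that the proof in \cite{Champ_Lamb_2} \emph{does not} use random characteristics; there the argument proceeds via the coalescent point process representation of the splitting tree, which gives direct access to the joint law of family ages and makes the Laplace functional computation explicit. Your unified treatment is therefore a legitimate alternative for Model~II, but it is not the route the cited reference takes.

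Two points in your sketch deserve correction or sharpening. First, a computational slip: in the subcritical clonal case $\Weto$ is bounded but its limit is $\deto/|\reto|$, not $1$ (in the exponential case $\Weto(x)=(\beto e^{\reto x}-\deto)/\reto\to \deto/|\reto|$ as $x\to\infty$ since $\reto<0$). This is precisely where the factor $|\reto|/\deto$ in the intensity comes from; with your claim $\Weto\to1$ you would not recover it. Second, the step you call ``Poissonization follows the standard argument'' is the crux and is not a direct consequence of the scalar Jagers--Nerman convergence $e^{-rt}Z^{\chi^t}_t\to\mathcal E\cdot(\text{const})$: here $O_t(a+c_t)$ is $O_p(1)$, not of order $e^{rt}$, and the limit is genuinely random (mixed Poisson) rather than $\mathcal E$ times a constant. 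What is actually needed is either Ta\"ib's specific extension in \cite{Taib1992} for characteristics whose total mass decays like $e^{-rt}$, or a direct Poisson approximation: conditionally on the asymptotic size $\mathcal E$, the $\sim\mathcal E e^{rt}$ potential founders each produce, independently, a family of shifted age $\ge x$ with probability $\sim Ce^{-rt}e^{-(r-\reto)x}$, and the law of rare events yields the mixed Poisson limit. Either way, this step is where the real work lies and where the subcriticality hypothesis (ensuring the per-founder probability is indeed of order $e^{-rt}$) is used in an essential way.
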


  \subsection{Sizes of largest families in the subcritical case of Model II}\label{sizes}
 In this paragraph, we still suppose that the clonal process is subcritical and we are interested in similar results as those of Subsection \ref{typical ages} about the sizes of the largest families. The aim is to find a number $x_t$ such that $L_t(x_t)$ converges to a finite and positive limit as $t\to+\infty$.

 Concerning Model I, this problem is still open. 
  On the contrary, it is possible to obtain in Model II the sizes of the largest families. In \cite{Champ_Lamb_2}, they are given for any life length distribution but to simplify the results, we only state them in the exponential case. The following result is a consequence of \eqref{expect_spectrum2II} applied with $a=t$ and summed over $i\geq x_t+c$. Recall that the clonal process is assumed to be subcritical, so that $\theta>r$.

 \begin{prop}[\cite{Champ_Lamb_2}]
 We set  $$x_t:=\frac{r t-\frac{\theta}{\theta-r}\log t}{-\log\left(\frac{b}{\theta+d}\right)}.$$
 Then, for $c\in\R$,
  $$\Esp\left[L_t(x_t+c)\right]\underset{t\to+\infty}\sim A(b,d,\theta)\left(\frac{b}{\theta+d}\right)^{c-1+\{-x_t-c\}}$$
  where $\{x\}$ denotes the fractional part of a real number $x$ and where $A(b,d,\theta)$ is an explicit constant that only depends on $b$, $d$ and $\theta$.
 \end{prop}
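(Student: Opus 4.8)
The plan is to sum the expected frequency spectrum \eqref{expect_spectrum2II}, specialised to $a=t$, over all family sizes exceeding $x_t+c$, and then to perform a Laplace-type asymptotic analysis of the resulting integral. By definition $L_t(x_t+c)=\sum_{i\geq n_t}M_t^{i,t}$ with $n_t:=\lfloor x_t+c\rfloor+1$, and since all terms in \eqref{expect_spectrum2II} are nonnegative, Fubini lets us push the sum over $i$ inside the age integral; the inner geometric series equals $\Weto(x)\bigl(1-1/\Weto(x)\bigr)^{n_t-1}$. Using $W'(t)=be^{rt}$, $W(t)=(be^{rt}-d)/r$ and $\beto=b$, this writes $\EE{L_t(x_t+c)}$ as the sum of a \emph{bulk term}
\[
\theta e^{rt}\int_0^t\Bigl(1-\frac{1}{\Weto(x)}\Bigr)^{n_t-1}\frac{e^{-\theta x}}{\Weto(x)}\,\dif x
\]
and a \emph{progenitor term} coming from the $\1{a=t}$ contribution. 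Since the clonal process is subcritical, $\reto=r-\theta<0$, so the explicit scale function $\Weto(x)=(be^{\reto x}-\deto)/\reto$ is bounded and increasing with $\Weto(x)\uparrow\deto/(\theta-r)$ and $1-1/\Weto(x)\uparrow b/(\theta+d)=:\ell<1$; inserting these estimates shows the progenitor term is $O\bigl(t^{\theta/(\theta-r)}e^{-\theta t}\bigr)$, hence negligible.

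For the bulk term I would first strip off the exponential growth: writing $\beta:=\log\frac{\theta+d}{b}$, the very definition of $x_t$ gives $\ell^{x_t}=e^{-\beta x_t}=e^{-rt}t^{\theta/(\theta-r)}$, while $n_t-1=\lfloor x_t+c\rfloor=x_t+c-1+\{-x_t-c\}$ for every $t$ with $x_t+c\notin\Z$; hence $\ell^{\,n_t-1}=e^{-rt}t^{\theta/(\theta-r)}\bigl(b/(\theta+d)\bigr)^{c-1+\{-x_t-c\}}$, and this is exactly where the factor $\bigl(b/(\theta+d)\bigr)^{c-1+\{-x_t-c\}}$ of the statement is produced. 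Next I would change variables $v=1-1/\Weto(x)$, which maps $[0,t]$ monotonically onto $[0,v_t]$ with $v_t:=1-1/\Weto(t)<b/\deto$. From $\Weto(x)=1/(1-v)$, $\Weto'(x)=e^{-\theta x}W'(x)=be^{\reto x}$ and the identity $\reto+\deto=b$ one obtains $e^{\reto x}=\tfrac{b-\deto v}{b(1-v)}$, hence $e^{-rx}=\bigl(\tfrac{b-\deto v}{b(1-v)}\bigr)^{\alpha}$ with $\alpha:=r/(\theta-r)$ and $\frac{e^{-\theta x}}{\Weto(x)}\dif x=\frac{(b-\deto v)^{\alpha}}{b^{1+\alpha}(1-v)^{1+\alpha}}\dif v$, so the bulk term equals
\[
\frac{\theta e^{rt}}{b^{1+\alpha}}\int_0^{v_t}\frac{v^{\,n_t-1}(b-\deto v)^{\alpha}}{(1-v)^{1+\alpha}}\,\dif v.
\]

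Now $v\mapsto v^{\,n_t-1}$ is increasing on $[0,v_t]$, so the integral concentrates near the right endpoint, and $v_t\uparrow b/\deto$ with $b/\deto-v_t\sim\mathrm{const}\,e^{-(\theta-r)t}$, exponentially smaller than the width $\sim1/n_t\sim\mathrm{const}/t$ of the Laplace peak. I would therefore (i) bound the contribution of $[v_t,b/\deto]$ and check it is of smaller order, reducing to $\int_0^{b/\deto}$, and (ii) apply Watson's lemma (Laplace's method at an endpoint) at $v=b/\deto$, where the substitution $v=(b/\deto)(1-u)$ yields the exact simplification $b-\deto v=bu$ and $(1-v)^{-1-\alpha}\to(\deto/(\theta-r))^{1+\alpha}$ as $u\to0$. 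Collecting constants this gives $\int_0^{v_t}\tfrac{v^{\,n_t-1}(b-\deto v)^{\alpha}}{(1-v)^{1+\alpha}}\dif v\sim b^{1+\alpha}\,\ell^{\,n_t-1}\,\tfrac{\deto^{\alpha}}{(\theta-r)^{1+\alpha}}\,\Gamma(\alpha+1)\,n_t^{-(\alpha+1)}$; since $\alpha+1=\theta/(\theta-r)$ and $n_t\sim rt/\beta$ we have $n_t^{-(\alpha+1)}\sim(\beta/r)^{\theta/(\theta-r)}t^{-\theta/(\theta-r)}$, and plugging this together with $\ell^{\,n_t-1}$ back into the bulk term, the factors $e^{\pm rt}$ and $t^{\pm\theta/(\theta-r)}$ all cancel, leaving $\EE{L_t(x_t+c)}\sim A(b,d,\theta)\bigl(b/(\theta+d)\bigr)^{c-1+\{-x_t-c\}}$ with $A(b,d,\theta)=\theta\,\Gamma\bigl(\tfrac{\theta}{\theta-r}\bigr)\,\frac{(d+\theta)^{r/(\theta-r)}}{(\theta-r)^{\theta/(\theta-r)}}\,\bigl(\tfrac1r\log\tfrac{d+\theta}{b}\bigr)^{\theta/(\theta-r)}$, a constant depending only on $b,d,\theta$ (recall $r=b-d$).

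The step I expect to be the main obstacle is (ii): the Laplace peak of $v\mapsto v^{\,n_t-1}$ sits at the \emph{moving} right endpoint $v_t$, which is only exponentially — not polynomially — close to the singularity $v=b/\deto$ of the weight $(b-\deto v)^{\alpha}$, so one must check with some care that truncating at $v_t$ rather than at $b/\deto$ does not affect the leading term and that the remainders in Watson's lemma are genuinely $o\bigl(n_t^{-(\alpha+1)}\bigr)$, uniformly in the coupled limit $t\to\infty$, $n_t\to\infty$. A secondary, purely bookkeeping, nuisance is carrying the discrete summation threshold $n_t$ through the whole computation, which is the origin of the fractional part $\{-x_t-c\}$ and of the statement being an asymptotic equivalence rather than a clean limit. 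Finally, the same strategy works for general lifespan distributions in Model II, as carried out in \cite{Champ_Lamb_2}: the only change is that the explicit $\Weto$ is replaced by its asymptotic behaviour as $x\to\infty$, which is all that enters the estimate.
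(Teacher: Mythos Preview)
Your proposal is correct and follows exactly the route the paper indicates: the paper itself does not give a detailed proof but states that the result ``is a consequence of \eqref{expect_spectrum2II} applied with $a=t$ and summed over $i\geq x_t+c$'' and refers to \cite{Champ_Lamb_2} for the full argument. Your computation carries out precisely this program --- summing the geometric series in $i$, isolating the negligible progenitor term, changing variables to $v=1-1/W_\theta(x)$, and applying Laplace's method at the endpoint $v=b/\deto$ --- and the handling of the discrete threshold $n_t$ via the identity $\lfloor x_t+c\rfloor = x_t+c-1+\{-x_t-c\}$ is exactly what produces the fractional part in the statement.
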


 For $t\geq0$ and $k\geq1$, we denote by $S^{k}_t$ the size of the $k$-th largest family in the whole population at time $t$.
Let $$X_t:=\sum_{k\geq1}\delta_{S_t^{k}-x_t}$$ be the point measure of the renormalized sizes of the population.
To get rid of fractional parts, the following theorem gives convergence in distribution of $L_t(x_t+c)$ and $X_t$  along a subsequence.
More precisely, for $n\geq 1$, let $t_n$ be such that $x_{t_n}=n$; this equation has a unique solution for any $n$ greater than some integer $n_0$. It satisfies
$$t_n\underset{n\to+\infty}\sim \frac{\theta-r}{\theta}\log\left(\frac{\theta+d}{b}\right)n.$$
We now state the convergence of the sequence $(X_{t_n},n\geq n_0)$.

 \begin{thm}[\cite{Champ_Lamb_2}]\label{pointprocess_size_II}
Conditional on the survival event, the sequence $(X_{t_n},n\geq n_0)$ of point processes on $\Z$
converges as $n\to+\infty$ on the set $\mathcal M(\R)$ equipped with the left-vague topology to a
mixed Poisson point measure on $Z$ with intensity measure
$$A(b,d,\theta)\frac{b-d}{b}\left(1-\frac{b}{\theta+d}\right) E\ \sum_{k\in\Z}\left(\frac{b}{\theta+d}\right)^{k-1}\delta_k$$
where the mixture coefficient $E$ is an exponential r.v. with mean $1$.
 \end{thm}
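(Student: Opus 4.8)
The plan is to deduce the point-process convergence from a Kallenberg-type criterion for mixed Poisson limits, after representing $X_{t_n}$ as a superposition of essentially independent contributions indexed by mutation events and conditioning on the Malthusian limit $\mathcal E$ of Proposition~\ref{convergence_Amaury}. Since the candidate limit measure is a.s.\ finite on every half-line $[x_0,\infty)$, convergence in the left-vague topology is equivalent to the following: for every finite set of integers $c_1<\dots<c_m$, the vector $\bigl(L_{t_n}(n+c_1),\dots,L_{t_n}(n+c_m)\bigr)$ converges in law, conditionally on the survival event, to $\bigl(\Pi[c_1,\infty),\dots,\Pi[c_m,\infty)\bigr)$, where, given $E$, $\Pi$ is a Poisson point measure on $\Z$ with the stated intensity. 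So everything reduces to controlling the joint law of the numbers of large families at finitely many thresholds near level $n$.

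First I would set up the representation. Every allelic family alive at time $t$ is founded by a unique mutation event, and by the branching property together with neutrality, conditionally on the ``mutation skeleton'' (the genealogy and the times of all mutation events) the clonal subpopulations they generate are independent, the one founded at time $t-a$ having at time $t$ the law of $\Zeto(a)$, where $\Zeto$ is the (subcritical, since $\theta>r$) linear birth--death process with birth rate $b$ and death rate $d+\theta$ started from one particle; moreover the conditional intensity of mutation events with age in $[a,a+\dif a)$ is $\theta\,Z(t-a)\,\dif a$, and $e^{-rs}Z(s)\to\mathcal E$ a.s.\ by Proposition~\ref{convergence_Amaury}. Thus $L_t(x)=\sum_m\1{\Zeto^{(m)}(a_m)\ge x}$ with factors that are i.i.d.\ given the skeleton. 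The crucial input is the high-level tail of $\Zeto$: by Proposition~\ref{1d_marginal_Xi} applied to $\Zeto$, whose one-dimensional marginals are geometric with parameter $1/\Weto(a)$ conditionally on non-extinction, together with $\Weto(a)=\bigl(be^{(r-\theta)a}-(d+\theta)\bigr)/(r-\theta)$ and $\Weto'(a)=be^{(r-\theta)a}$, one gets $\p(\Zeto(a)\ge k)=\frac{\Weto'(a)}{b\,\Weto(a)}\bigl(1-1/\Weto(a)\bigr)^{k-1}$, and since $r-\theta<0$ one has $1-1/\Weto(a)\uparrow b/(\theta+d)=:\rho<1$ as $a\to\infty$. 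A Watson/Laplace analysis of $\int_0^t\theta\,e^{r(t-a)}\,\p(\Zeto(a)\ge n+c)\,\dif a$ shows that the integral concentrates on ages $a\asymp(\log n)/(\theta-r)$, reproduces the first-moment asymptotics $\Esp[L_t(x_t+c)]$ of the Proposition preceding the theorem, and along $t=t_n$ — where $x_{t_n}=n\in\Z$, so the fractional part $\{-x_{t_n}-c\}$ vanishes — gives $\Esp[L_{t_n}(n+c)]\to A(b,d,\theta)\,\rho^{\,c-1}$; more generally it pins down the value of the candidate Poisson intensity on every interval.

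Next comes the Poissonisation. The count $L_{t_n}(n+c)$ is a sum of a huge number of indicators, each of probability $O(\rho^{\,n})$, that are only weakly dependent because distinct families occupy disjoint parts of the tree; conditionally on $\mathcal E$ its mean converges to a deterministic multiple of $\mathcal E$. To upgrade this to convergence in law to a Poisson variable I would either compute the conditional factorial moments $\EE{(L_{t_n}(n+c))_{(j)}\mid\mathcal E}$ and check that they converge to the $j$-th powers of the mean — using that $j$ disjoint large families sit on $j$ a.s.\ disjoint clonal subtrees whose joint founding intensity asymptotically factorises — or run a Chen--Stein argument whose coupling/neighbourhood structure is supplied by the coalescent point process representation of the splitting tree (branch depths with tail $1/W$, Poissonian mutation marks), in which the sizes of the allelic families, read along the contour, form a sequence with a renewal/mixing structure, so that families near the extreme level $n$ are automatically well separated. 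Running the same computation on disjoint size-windows $[n+c_j,n+c_{j+1})$ yields asymptotic independence of the corresponding counts, hence the joint convergence to $\Pi$ with the geometric intensity $\sum_{k\in\Z}\rho^{\,k-1}\delta_k$; averaging over $\mathcal E$ — with $E$ a suitable positive multiple of $\mathcal E$ and the prefactor $A(b,d,\theta)\frac{b-d}{b}\bigl(1-\frac{b}{\theta+d}\bigr)$ fixed by matching first moments — produces the mixed Poisson limit. One also discards the negligible pieces: families of age $\gg\log n$ are already extinct, families of age $\ll\log n$ are too small to reach level $n$, and the ancestral-type family is a single atom, invisible in the left-vague limit. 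The claim on $t_n$ follows by inverting the strictly increasing map $t\mapsto x_t$, and passing to the subsequence $(t_n)$ is exactly what removes the oscillating term $\{-x_t\}$ and turns the limiting intensity into the clean geometric series.

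I expect the main obstacle to be this last step: promoting the (already delicate) first-moment asymptotics to a genuine point-process Poisson limit. The families are not independent, and one must quantify the dependence sharply enough — either through all factorial moments or through a Stein coupling — uniformly as $n\to\infty$; the coalescent point process representation is the natural tool, but extracting from it a usable mixing estimate for the sizes of allelic families near the extreme level, while simultaneously handling the conditioning on $\mathcal E$ and interchanging the large-$t$, large-level and $\mathcal E$-averaging limits, is where the real work lies.
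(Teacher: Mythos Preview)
The paper does not actually contain a proof of this theorem: it is a survey, and the statement is simply quoted from \cite{Champ_Lamb_2} without argument. So there is no ``paper's own proof'' to compare your proposal against. What the paper does indicate about the methods in \cite{Champ_Lamb_2} is that the Model~II proofs are based on the coalescent point process representation of the splitting tree (see the remarks after Corollary~\ref{moyenne spectre allelique 2} and after Proposition~\ref{critical_ages}), and explicitly \emph{not} on random characteristics.

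Your outline is reasonable and already leans on the right tool: you invoke the coalescent point process to get the mixing structure needed for a Chen--Stein or factorial-moment argument. That is indeed the route used in \cite{Champ_Lamb_2}. One point to be careful about in your write-up: the conditional-independence picture you sketch first (``conditionally on the mutation skeleton the clonal subpopulations are independent, with founding intensity $\theta Z(t-a)\,\dif a$'') is not quite literally correct in Model~II, because a mutation truncates the life of the individual in the clonal tree, so the clonal subtree founded by a mutation and the residual life of the mutating particle in its \emph{previous} family are entangled through the same individual's death time. The coalescent point process bypasses this by working directly with the sequence of coalescence depths and Poisson mutation marks along the contour; the sizes of allelic families then become functionals of i.i.d.\ blocks, and the extreme-value/Poisson limit follows from standard arguments for such sequences. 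Your plan would go through once you replace the heuristic ``conditioning on the skeleton'' step by this representation, which is exactly what you hint at in your second option.
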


  \subsection{Other results}\label{Other_results}
  \subsubsection{Critical case in Model I}
  The case of a critical clonal process $Z_p$ for a general supercritical splitting tree is treated in Section 3.5.1 of \cite{TheseMathieu} where the counterparts of Propositions \ref{expect subcritical} and \ref{critical_ages} and Theorem \ref{point process2} are proved.

  If $(1-p)m=1$, provided that the second moment $\sigma^2:=\intpos \Lambda(\dif u)u^2$ is finite and that a condition about the tail distribution of $\Lambda$ holds, ages of oldest families are of order
  $$c_t=t-\frac{\log t}{r}.$$
  Notice that these conditions about $\Lambda$ are trivially satisfied in the exponential case.
  These results were also proved in \cite[Ch. 4]{Taib1992} for any CMJ-process $Z$, \emph{i.e.} with a birth point process as general as possible, but in that case, limits were not explicit.

  Similarly to the subcritical case, the problem of sizes of the largest families is still open.
  Nevertheless, we can state the following conjecture about their order of magnitude.
  \begin{conj} \label{conjecture}
   If $$x_t:=\frac{2}{\sigma^2}(r t^2-t\log t),$$ as $t\to\infty$, on the survival event, $L_t(x_t)$ converges in distribution to a non-degenerate geometric r.v. 
  \end{conj}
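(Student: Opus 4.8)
The plan is to follow the route that yielded Theorem~\ref{point process2} (oldest families in Model~I) and Theorem~\ref{pointprocess_size_II} (largest families in the subcritical case of Model~II): first determine the order of magnitude of the largest families through the first-moment formula~\eqref{expect_spectrum2I}, and then upgrade that estimate to a convergence in distribution by viewing $L_t(x_t)$ as a Crump--Mode--Jagers process counted with a suitable \emph{time-dependent} random characteristic and applying the Jagers--Nerman limit theory (\cite{Jagers1984a,Jagers1984}, and Chapter~4 of~\cite{Taib1992}). The limit will be a mixed Poisson variable whose mixing law is the exponential variable $\mathcal E$ of Proposition~\ref{convergence_Amaury}; a Poisson variable mixed by an exponential is geometric, which --- exactly as in Proposition~\ref{critical_ages} --- is why the limit is a non-degenerate geometric r.v.

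\paragraph{Step 1: the first moment.}
Since the clonal process is critical, $\psi_p=p\lambda+(1-p)\psi$ vanishes to second order at the origin; write $\psi_p(\lambda)\sim\kappa\lambda^2$ as $\lambda\downarrow0$, the constant $\kappa$ being read off from $\Lambda$ and finite because $\sigma^2<\infty$. A Tauberian argument then gives $W_p(a)\sim a/\kappa$ and $W_p'(a)\to 1/\kappa$ as $a\to\infty$, while, $Z$ being supercritical, $W'(s)\sim\frac{r}{\psi'(r)}e^{rs}$ as $s\to\infty$. Summing the geometric tail $\sum_{i>x}(1-1/W_p(a))^{i-1}=W_p(a)(1-1/W_p(a))^{\lfloor x\rfloor}$ in~\eqref{expect_spectrum2I}, and discarding the contribution of the progenitor's clonal family (of order $\exp(-x/W_p(t))$, negligible at $x=x_t$), one gets
$$
\Esp[L_t(x)]=\frac{p}{b(1-p)}\int_0^tW'(t-a)\Big(1-\tfrac{1}{W_p(a)}\Big)^{\lfloor x\rfloor}\frac{W_p'(a)}{W_p(a)}\,\dif a+o(1).
$$
Replacing $(1-1/W_p(a))^{\lfloor x\rfloor}$ by $\exp(-x/W_p(a))$ and the scale functions by their asymptotics turns the integral into a Laplace integral $e^{rt}\int_0^t a^{-1}\exp(-ra-\kappa x/a)\,\dif a$. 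The phase $a\mapsto ra+\kappa x/a$ has an interior minimum at $a_\star=\sqrt{\kappa x/r}$, with $a_\star\sim t/2$ and minimal value $2\sqrt{\kappa r x}$, so Laplace's method yields $\Esp[L_t(x)]\asymp e^{rt}\,x^{-1/4}\,e^{-2\sqrt{\kappa r x}}$. Requiring this to stay bounded away from $0$ and $\infty$ forces $2\sqrt{\kappa r x_t}=rt-\tfrac12\log t+O(1)$, i.e.\ $x_t=\frac{rt^2-t\log t}{4\kappa}+O(t)$, which is the parabolic-minus-$t\log t$ scale of the statement (the $t\log t$ correction being produced by the $x^{-1/4}$ prefactor in the saddle-point estimate). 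The same computation run with $x$ replaced by $x_t+\lfloor ut\rfloor$ gives $\Esp[L_t(x_t+\lfloor ut\rfloor)]\to\ell(u)$ for every $u\in\R$, with $\ell$ an explicit strictly decreasing exponential function.

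\paragraph{Step 2: from the mean to the law.}
Each family is founded by a particle $x$ that mutated with respect to its mother, and its size at time $t$ equals $Z_p^{(x)}(t-\sigma_x)$, the number of clonal descendants of $x$ alive at $t$; hence, with the notation of~\eqref{CMJwith_charac},
$$
L_t(x_t)=Z_t^{\chi^t}+\1{Z_p(t)>x_t},\qquad \chi^t_x(s):=\1{x\text{ is a mutant}}\,\1{Z_p^{(x)}(s)>x_t},
$$
the last indicator going to $0$ in probability. The triples (life length, birth point process, characteristic) are i.i.d.\ over particles, and $\chi^t$ is a time-dependent random characteristic in the sense of~\cite{Jagers1984a,Jagers1984}. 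By Step~1, $\Esp[Z_t^{\chi^t}]$ stays bounded whereas $e^{-rt}\Esp[Z_t^{\chi^t}]\to0$; this is the ``rare characteristic'' regime in which the Jagers--Nerman theory provides, conditionally on survival, convergence in distribution of $Z_t^{\chi^t}$ to a mixed Poisson variable with random parameter proportional to $\mathcal E$, hence --- by the elementary identity recalled in Proposition~\ref{critical_ages} --- to a non-degenerate geometric r.v. Carrying out the same argument with $x_t$ replaced by $x_t+\lfloor ut\rfloor$ simultaneously for all $u\in\R$ identifies the limit of the rescaled point process $\sum_{k\ge1}\delta_{(S_t^k-x_t)/t}$ as a mixed Poisson process on $\R$ with an exponential intensity --- the size counterpart of Theorem~\ref{point process2} --- of which $L_t(x_t)$ is the mass on $(0,\infty)$.

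\paragraph{Main obstacle.}
The delicate point --- and, presumably, the reason the statement is only a conjecture --- is the verification of the technical hypotheses of the time-dependent-characteristic limit theorem for this particular $\chi^t$. One needs sufficiently uniform estimates on the upper tail $\p(Z_p(s)>x_t)$ of the \emph{critical} clonal process when $s$ and $x_t$ grow together (the dominant contribution coming from $s\asymp t/2$), together with the corresponding second-moment bounds; in Model~I this is hampered by the absence of a closed form for $W_p$, of which only the first-order asymptotics is available. A further difficulty is the unusual super-linear scale $x_t\asymp t^2$ and the rescaling by $t\to\infty$ of a $\Z$-valued point process, which lies slightly outside the range of situations for which Jagers--Nerman's conditions have been checked; one would either have to extend that theory or replace Step~2 by a bespoke argument combining the factorial-moment computations --- whose cross terms factorise by the branching property, since families founded by distinct mutants live on disjoint clonal lineages --- with a tightness estimate. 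Turning these heuristics into proofs is what separates Conjecture~\ref{conjecture} from a theorem.
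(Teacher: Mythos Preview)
There is nothing to compare your proposal against: in the paper this statement is explicitly a \emph{conjecture}, not a theorem. The authors write ``Similarly to the subcritical case, the problem of sizes of the largest families is still open. Nevertheless, we can state the following conjecture about their order of magnitude,'' and they give no proof, not even a sketch. So the paper's ``own proof'' does not exist.

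That said, your proposal is not a proof either, and you are candid about this in your final paragraph. What you have written is a plausible road map --- first-moment asymptotics via Laplace's method, then an appeal to the Jagers--Nerman rare-characteristic regime to obtain a mixed-Poisson (hence geometric) limit --- together with an honest list of the obstructions: no closed form for $W_p$, the need for uniform tail estimates on the critical clonal process at the unusual scale $x_t\asymp t^2$, and the fact that the required technical hypotheses of the time-dependent-characteristic theorem have not been verified in this setting. These are exactly the reasons the authors leave the statement open; indeed, in the supercritical discussion of Section~\ref{Other_results} they voice explicit doubts about applying Ta\"ib's Theorem~A.7 in closely related situations. Your Step~2 therefore rests on precisely the machinery the paper flags as unreliable here.

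One minor point on Step~1: your saddle-point computation yields $x_t=(rt^2-t\log t)/(4\kappa)+O(t)$ with $\kappa=(1-p)\sigma^2/2$, i.e.\ a leading constant $1/\big(2(1-p)\sigma^2\big)$, which does not match the $2/\sigma^2$ in the conjecture. Either a factor has slipped in your Laplace estimate (the prefactors $p/\big(b(1-p)\big)$, $W_p'(a)/W_p(a)\sim 1/a$, and the Gaussian width at the saddle all contribute), or the constant in the conjecture is itself only heuristic; in any case this discrepancy would have to be resolved before the argument could be made rigorous.

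In short: your outline is reasonable and aligned with the spirit of the paper's other proofs, but it remains a heuristic for an open problem, not a proof --- and the paper offers no proof to compare it with.
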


  \subsubsection{Critical case in Model II}
  The general case when $Z_\theta$ is critical ($\theta=r$) can be found in Sections 3.4 and 5 in \cite{Champ_Lamb_2}.
  For both ages and sizes, the counterparts of the results of Subsections \ref{typical ages} and \ref{sizes} hold.

  As in Model I, ages of the oldest families are of order $c_t=t-\frac{\log t}{r}.$
Moreover, sizes of the largest ones are of order $$x_t=\frac{r^2}{4\pse}\left(t-\frac{\log t}{2r}\right)^2$$
and the point measure
$$\sum_{k\geq1}\delta_{\sqrt{S_t^{k}}-\sqrt{x_t}}$$
converges to a mixed Poisson measure as $t\to+\infty$ but contrary to Theorem \ref{pointprocess_size_II}, it does not only hold along a subsequence.

  \subsubsection{Sizes of largest families in supercritical cases}
 In \cite[Ch.~3]{TheseMathieu}, general splitting trees in Model I are considered.
 When the clonal process $Z_p$ is supercritical, that is, when $(1-p)m>1$, a result about the
 sizes of the largest families is proved.
 First notice that, as in \eqref{conv_surcrit},
$$\p(Z_p(t)\longrightarrow0)=1-\frac{r_p}{(1-p)b}$$
and on $\{Z_p(t)\longrightarrow0\}^\mathrm{c}$,
$e^{-r_p t}Z_p(t)$ a.s. converges as $t\rightarrow\infty$ to an exponential random variable.
Hence, the sizes of alive families at time $t$ must be of order $e^{r_p t}$ as $t\to+\infty$.
We proved this in \cite{TheseMathieu} by showing that $\EE{L_t\left(e^{r_p t}\right)}$ converges as $t\to+\infty$ to an explicit limit.

Notice that we cannot obtain similar results to Proposition \ref{critical_ages} and Theorem \ref{point process2} concerning the convergence in distribution of $L_t\left(e^{(b(1-p)-d)t}\right)$ and
the convergence of the associated point measure of the decreasing sequence of family sizes.

In \cite{Taib1992}, Ta\"ib considers a more general model than our Model I; mutation mechanism is the same but $Z_p$ can be any supercritical CMJ-process. In his Theorem 4.6, by using a time-dependent characteristic argument, he proved the convergence in distribution of $L_t\left(e^{r_p t}\right)$ (to a non-explicit random variable).
However, we have doubts about the application of Theorem A.7, since the technical requirements
of this theorem do not seem to hold in his case. These technical requirements are neither
proved to hold in \cite{Taib1992} nor in \cite{Jagers1984}.\\

In Model II, for a general supercritical splitting tree, if $Z_\theta$ is supercritical, that is, $r>\theta$, $Z_\theta(t)$ asymptotically grows like $e^{(r-\theta)t}$.
In \cite[Prop. 3.2]{Champ_Lamb_2}, it is proved that $\EE{L_t\left(e^{(r-\theta)t}\right)}$ converges as $t\to+\infty$, but we were unable to obtain any convergence in distribution in that case.

 \paragraph{Acknowledgments.}
This work was supported by project MANEGE ANR-09-BLAN-0215
(French national research agency). The authors want to thank an anonymous referee for his/her careful check of this manuscript.


\begin{thebibliography}{10}

\bibitem{MR0167642}
M.~Abramowitz and I.~A. Stegun.
\newblock {\em Handbook of mathematical functions with formulas, graphs, and
  mathematical tables}, volume~55 of {\em National Bureau of Standards Applied
  Mathematics Series}.
\newblock For sale by the Superintendent of Documents, U.S. Government Printing
  Office, Washington, D.C., 1964.

\bibitem{Levy_processes}
J.~Bertoin.
\newblock {\em L\'evy processes}, volume 121 of {\em Cambridge Tracts in
  Mathematics}.
\newblock Cambridge University Press, Cambridge, 1996.

\bibitem{Bertoin2009}
J.~Bertoin.
\newblock The structure of the allelic partition of the total population for
  {G}alton-{W}atson processes with neutral mutations.
\newblock {\em Ann. Probab.}, 37(4):1502--1523, 2009.

\bibitem{Bertoin2010}
J.~Bertoin.
\newblock A limit theorem for trees of alleles in branching processes with rare
  neutral mutations.
\newblock {\em Stochastic Process. Appl.}, 120(5):678--697, 2010.

\bibitem{Champagnat2010}
N.~Champagnat and A.~Lambert.
\newblock Splitting trees with neutral {P}oissonian mutations {I}: {S}mall
  families.
\newblock {\em Stochastic Process. Appl.}, 122(3):1003--1033, 2012.

\bibitem{Champ_Lamb_2}
N.~Champagnat and A.~Lambert.
\newblock Splitting trees with neutral {P}oissonian mutations {II}: {L}argest
  and {O}ldest families, 2012.
\newblock In revision for \emph{Stoch. Process. Appl.} Preprint available at
  \url{http://arxiv.org/abs/1108.4812}.

\bibitem{Durrett_Ovarian}
R.~Durrett, K.~Danesh, L.~Havrilesky, and E.~Myers.
\newblock A branching process model of ovarian cancer.
\newblock {\em To appear}, 2012.

\bibitem{Durrett_Mayberry}
R.~Durrett and J.~Mayberry.
\newblock Traveling waves of selective sweeps.
\newblock {\em Ann. Appl. Probab.}, 21(2):699--744, 2011.

\bibitem{Durrett201042}
R.~Durrett and S.~Moseley.
\newblock Evolution of resistance and progression to disease during clonal
  expansion of cancer.
\newblock {\em Theoretical Population Biology}, 77(1):42 -- 48, 2010.

\bibitem{Ewens1972}
W.~J. Ewens.
\newblock The sampling theory of selectively neutral alleles.
\newblock {\em Theoret. Population Biology}, 3:87--112; erratum, ibid. 3
  (1972), 240; erratum, ibid. 3 (1972), 376, 1972.

\bibitem{Gani_Yeo1965}
J.~Gani and G.~F. Yeo.
\newblock Some birth-death and mutation models for phage reproduction.
\newblock {\em Journal of Applied Probability}, 2(1):pp. 150--161, 1965.

\bibitem{Geiger1996}
J.~Geiger.
\newblock Size-biased and conditioned random splitting trees.
\newblock {\em Stochastic Process. Appl.}, 65(2):187--207, 1996.

\bibitem{Geiger1997}
J.~Geiger and G.~Kersting.
\newblock Depth-first search of random trees, and {P}oisson point processes.
\newblock In {\em Classical and modern branching processes ({M}inneapolis,
  {MN}, 1994)}, volume~84 of {\em IMA Vol. Math. Appl.}, pages 111--126.
  Springer, New York, 1997.

\bibitem{Griffiths1988a}
R.~C. Griffiths and A.~G. Pakes.
\newblock An infinite-alleles version of the simple branching process.
\newblock {\em Adv. in Appl. Probab.}, 20(3):489--524, 1988.

\bibitem{Haegeman-Etienne}
B.~Haegeman and R.~S. Etienne.
\newblock Relaxing the zero-sum assumption in neutral biodiversity theory.
\newblock {\em Journal of Theoretical Biology}, 252(2):288 -- 294, 2008.

\bibitem{Hubbell}
S.~P. Hubbell.
\newblock {\em The Unified Neutral Theory of Biodiversity and Biogeography}.
\newblock Princeton Univ. Press, NJ, 2001.

\bibitem{IwasaApril2006}
Y.~Iwasa, M.~A. Nowak, and F.~Michor.
\newblock Evolution of resistance during clonal expansion.
\newblock {\em Genetics}, 172(4):2557--2566, April 2006.

\bibitem{Jagers_BP_with_bio}
P.~Jagers.
\newblock {\em Branching processes with biological applications}.
\newblock Wiley-Interscience, London, 1975.
\newblock Wiley Series in Probability and Mathematical Statistics---Applied
  Probability and Statistics.

\bibitem{Jagers1984a}
P.~Jagers and O.~Nerman.
\newblock The growth and composition of branching populations.
\newblock {\em Adv. in Appl. Probab.}, 16(2):221--259, 1984.

\bibitem{Jagers1984}
P.~Jagers and O.~Nerman.
\newblock Limit theorems for sums determined by branching and other
  exponentially growing processes.
\newblock {\em Stochastic Process. Appl.}, 17(1):47--71, 1984.

\bibitem{Kendall1960}
D.~G. Kendall.
\newblock Birth-and-death processes, and the theory of carcinogenesis.
\newblock {\em Biometrika}, 47(1/2):pp. 13--21, 1960.

\bibitem{Amaury_contour_splitting_trees}
A.~Lambert.
\newblock The contour of splitting trees is a {L}\'evy process.
\newblock {\em Ann. Probab.}, 38(1):348--395, 2010.

\bibitem{Amaury-Immig-Mut}
A.~Lambert.
\newblock Species abundance distributions in neutral models with immigration or
  mutation and general lifetimes.
\newblock {\em Journal of Mathematical Biology}, 63:57--72, 2011.

\bibitem{Lambert-Trapman}
A.~Lambert and P.~Trapman.
\newblock Splitting trees stopped when the first clock rings and {V}ervaat's
  transformation.
\newblock {\em To appear in J. Appl. Prob.}, 2012.

\bibitem{Maruvka2011}
Y.~E. Maruvka, D.~A. Kessler, and N.~M. Shnerb.
\newblock The birth-death-mutation process: A new paradigm for fat tailed
  distributions.
\newblock {\em PLoS ONE}, 6(11):e26480, 11 2011.

\bibitem{Maruvka2010245}
Y.~E. Maruvka, N.~M. Shnerb, and D.~A. Kessler.
\newblock Universal features of surname distribution in a subsample of a
  growing population.
\newblock {\em Journal of Theoretical Biology}, 262(2):245 -- 256, 2010.

\bibitem{Nerman_supercritical_CMJ}
O.~Nerman.
\newblock On the convergence of supercritical general ({C}-{M}-{J}) branching
  processes.
\newblock {\em Z. Wahrsch. Verw. Gebiete}, 57(3):365--395, 1981.

\bibitem{Nowak09122003}
M.~A. Nowak, F.~Michor, and Y.~Iwasa.
\newblock The linear process of somatic evolution.
\newblock {\em Proceedings of the National Academy of Sciences},
  100(25):14966--14969, 2003.

\bibitem{Pakes1989}
A.~G. Pakes.
\newblock An infinite alleles version of the {M}arkov branching process.
\newblock {\em J. Austral. Math. Soc. Ser. A}, 46(1):146--169, 1989.

\bibitem{TheseMathieu}
M.~Richard.
\newblock {\em Arbres, Processus de branchement non markoviens et Processus de
  Lévy}.
\newblock PhD thesis, UPMC, Paris 6, 2011.
\newblock Available at \url{http://tel.archives-ouvertes.fr/tel-00649235/fr/}.

\bibitem{Splitting_trees_Immig}
M.~Richard.
\newblock Limit theorems for supercritical age-dependent branching processes
  with neutral immigration.
\newblock {\em Advances in Appl. Probability}, 43-1:276--300, 2011.

\bibitem{Sagitov2009}
S.~Sagitov and M.~C. Serra.
\newblock Multitype {B}ienaym\'e-{G}alton-{W}atson processes escaping
  extinction.
\newblock {\em Adv. in Appl. Probab.}, 41(1):225--246, 2009.

\bibitem{Stadler05052011}
T.~Stadler.
\newblock Inferring epidemiological parameters based on allele frequencies.
\newblock {\em Genetics}, 2011.

\bibitem{Taib1992}
Z.~Ta\"{\i}b.
\newblock {\em {Branching processes and neutral evolution.}}
\newblock {Lecture Notes in Biomathematics. 93. Berlin: Springer-Verlag. viii,
  112 p. }, 1992.

\end{thebibliography}
\end{document}